\numberwithin{equation}{section}
\newtheorem{example}{Example}[section]
\newtheorem{definition}[example]{Definition}
\newtheorem{theorem}[example]{Theorem}
 \newtheorem{proposition}[example]{Proposition}
\newtheorem{lemma}[example]{Lemma}
\newtheorem*{maintheorem*}{Main Theorem}
\numberwithin{equation}{section}
\renewcommand{\i}{\ifmmode\mathit{\mathchar"7010 }\else\char"10 \fi}
\renewcommand{\j}{\ifmmode\mathit{\mathchar"7011 }\else\char"11 \fi}
\newcommand{\R}{\mathbb{R}}
\newcommand{\N}{\mathbb{N}}
\newcommand{\norm}[1]{\left\|#1\right\|}
\newcommand{\weak}{\rightharpoonup}
\newcommand{\pt}{\partial_t}
\newcommand{\ptt}{\partial_{tt}^2}
\newcommand{\vfi}{\varphi}
\newcommand{\M}{{\mathcal M}}
\def\begi{\begin{itemize}}
\def\endi{\end{itemize}}
\def\bega{\begin{array}}
\def\enda{\end{array}}
\newcommand{\bv}{{\bf v}}
\def\u{\mathbf{u}}
\def\wu{{\widetilde{\mathbf{u}}}}
\def\vv{\mathbf{v}}
\def\b{\mathbf{b}}
\def\x{\mathbf{x}}
\def\f{\mathbf{f}}
\newenvironment{Assumptions}% Definition of assumptions
{%

\begin{enumerate}}%
{\end{enumerate}}
\begin{document}

\title[A P-(EC)$^k$ scheme for nonlinear Peridynamics]{A numerical framework for nonlinear Peridynamics on two-dimensional manifolds based on implicit P-(EC)$^k$ schemes}

\author[A. Coclite]{Alessandro Coclite}
\author[G. M. Coclite]{Giuseppe Maria Coclite}
\author[F. Maddalena]{Francesco Maddalena}
\author[T. Politi]{Tiziano Politi}

\address[A. Coclite, T. Politi]{Dipartimento di Ingegneria Elettrica e dell'Informazione (DEI), \newline Politecnico di Bari, Via Re David 200 -- 70125 Bari, Italy}
\email{alessandro.coclite@poliba.it, tiziano.politi@poliba.it}

\address[G. M. Coclite, F. Maddalena]{ Dipartimento di Meccanica, Matematica e Management (DMMM), \newline Politecnico di Bari, Via Re David 200 -- 70125 Bari, Italy}
\email{giuseppemaria.coclite@poliba.it, francesco.maddalena@poliba.it}

\date{\today}

\subjclass[2020]{74A70, 74B20, 70G70, 35Q70, 74S20.}

\keywords{Peridynamics. Nonlocal continuum mechanics. Elasticity. Dissipative solutions. Numerical methods.}

\begin{abstract} 

In this manuscript, an original numerical procedure for the nonlinear peridynamics on arbitrarily--shaped two-dimensional (2D) closed manifolds is proposed. When dealing with non parameterized 2D manifolds at the discrete scale, the problem of computing geodesic distances between two non-adjacent points arise. Here, a routing procedure is implemented for computing geodesic distances by re-interpreting the triangular computational mesh as a non-oriented graph; thus returning a suitable and general method. Moreover, the time integration of the peridynamics equation is demanded to a P-(EC)$^k$ formulation of the implicit $\beta$-Newmark scheme. The convergence of the overall proposed procedure is questioned and rigorously proved. Its abilities and limitations are analyzed by simulating the evolution of a two-dimensional sphere. The performed numerical investigations are mainly motivated by the issues related to the insurgence of singularities in the evolution problem. The obtained results return an interesting picture of the role played by the nonlocal character of the integrodifferential equation  in the intricate processes leading to the spontaneous formation of singularities in real materials.
\end{abstract}

\maketitle

\section{\bf Introduction}
\label{sec:intro}

Peridynamics was initiated by S.A. Silling in~\cite{Sill} by introducing a genuinely nonlocal approach to continuum mechanics based on long-range internal forces in place of the classical contact forces ruled by the Cauchy stress (see also \cite{MR3831320, MR2804637, MR3288460, MR3061151, MR3297136, EP, MR2375050,  E, MR0307573, KRONER1967731, LP, LP1, SILLING201073, MR2592410, MR2348150, MR2430855}). The main motivations of this new formulation of mechanical problems originate from the desire to find analytical descriptions of phenomena ascribable to the so-called non-smooth evolution processes affecting the behavior of real-world materials. Indeed, the spontaneous creation of singularities like cracks, damages, or defects and their evolution, as well as the dispersive character of the waves propagation, are the main sources of trouble to establish suitable mathematical models which frame these problems in a coherent and unified framework. These phenomena suggest the presence of many scales involved in the underlying physics and this represents a very challenging problem in the analysis. {From the numerical point of view, peridynamics models can be grossly divided into two macro-areas: finite element models~\cite{lipton14,lipton16,jha18,jha19,jha21,lipton21} and mesh-free methods~\cite{silling2005meshfree,gu2017voronoi,bessa2014meshfree} or, equivalently, quadrature methods \cite{javili2019peridynamics,shojaei2022hybrid}. Indeed, such a division is solely for summarizing purposes, and a plethora of subdivisions, parallelisms, and couplings can be found in the literature. Moreover, further numerical methods, like spectral methods \cite{LP1,CFLMP,jafarzadeh2020efficient,LP}, Boundary Element Methods \cite{liang2021boundary}, have been developed recently in the context of peridynamics, enlarging the range of available numerical tools.}

The peridynamic formulation leads to the study of nonlinear integrodifferential equations which encapsulate the main ingredients to focus these problems~\cite{dimola2022}. In this paper, the authors pursue previous studies 
(see \cite{CDFMRV, CDFMV, Coclite_2018, CFLMP}) in which the governing equation of motion is ruled by a singular integral in which the nonlocal effects are weighted through a suitable exponent while the nonlinearity in the constitutive assumptions plays a crucial role in the evolution problem. More precisely, here the authors investigate in detail the evolution of a material body whose resting configuration is a two-dimensional closed manifold, to capture the insurgence of singularities deriving from possible energy localizations. This program is made possible by the functional setting in which the well-posedness of the Cauchy problem is proved. 
\textcolor{blue}{ Note that peridynamic kernels should focus on three grandstanding ingredients~\cite{dimola2022}: {\it nonlocality},  {\it impenetrability of the medium},   {\it nonlinearity and spontaneous fracture generation}. For granting the latter, such kernel needs to be singular (extending so the Silling's model based on nonsingular kernels) and thanks to its singular nature the energy space is $W^{\alpha,p}$ ($\alpha$ being the nonlocality parameter and $p$ the nonlinearity exponent); so that tailoring solutions exposing discontinuities by the measure of $N-1$ (with $N$ being the dimension of the space). }

The investigations are carried out through a geometric discretization of the original problem and implementing a numerical scheme that addresses the full nonlinear problem without any approximation and can deal with any smooth closed two-dimensional manifold. This last feature constitutes, as far as the authors knowledge, an interesting novelty. {In this work only closed manifolds are considered. 
\textcolor{blue}{As well known, a rigorous procedure for including boundary conditions in peridynamics is still an open problem. 
Nevertheless there are many recent rigorous studies on nonlocal trace theories \cite{zbMATH07505259,zbMATH07527316} which constitute a suitable framework to cast the present problem.
Some formal strategies were developed in \cite{prudhomme2020,zhao2020algorithm,chen2021peridynamics,behera2022imposition}}. 
Future studies will include the description of the boundaries and the role of the choice of the specific strategy for enforcing such boundary conditions.} The paper is organized as follows. In Section~\ref{evolpb} we prove the well-posedness of the Cauchy problem for the peridynamic evolution and introduce a suitable discretization of the problem. 
In Section~\ref{sec:estimates} we prove the convergence of the solutions of the discretized problem to the ones of the continuous problem. Section~\ref{numscheme} is devoted to the study of the numerical scheme based on the approximation of the geodesic distance on the manifold providing a pseudo-code for such computation. The full discretization of the semi-discrete Cauchy problem is obtained by using an implicit P-(EC)$^k$ $\beta-$Newmark II order scheme. In Section~\ref{exp} we perform detailed numerical experiments focusing on the analysis of the evolution of the material body, in dependence on the two key parameters $p$ and $\alpha$. In particular,  the case of assigned initial velocity without external forces and the case of assigned uniaxial load are critically studied. By tracking in time the surface stretching, and the relative internal energy, the authors deepen the knowledge of the model peculiarities.   

\section{\bf The evolution problem}
\label{evolpb}
In \cite{Coclite_2018} the Cauchy problem related to a very general model of nonlocal continuum mechanics, inspired by the seminal work by  Silling~\cite{Sill},  was studied and the analytical aspects concerning global solutions in energy space were exploited in the framework of nonlinear hyperelastic constitutive assumptions for an unbounded domain. In this paper, we consider the peridynamic evolution of a material body whose initial configuration is given by a  two-dimensional smooth closed manifold  $\M\subset \R^3$. More precisely,  the motion of the body with constant density $\rho>0$ is modeled (see \cite{CDMV})
by the initial-value problem 
\begin{equation}
\label{eq:CP}
\begin{cases}
\rho\partial_{tt} \u(\x,t)=(K\u(\cdot,t))(\x)+\b(\x,t),&\quad \x\in \M,\: t >0,\\
\u(\x,0)=\u_0(\x),\:\partial_t \u(\x,0)=\mathbf{v}_0(\x),&\quad \x\in\M,
\end{cases}
\end{equation}
where
\begin{equation}
\label{eq:K}
(K\u(\cdot,t))(\x):= -\int_{\M\cap B_\delta(\x)}\mathbf{f}(\x'-\x,\,\u(\x',t)-\u(\x,t))\,d\mathcal{H}^2(\x'),\quad {\mbox{ for every }}\,\x\in\M,
\end{equation}
for a given $\delta>0$, where $\mathcal{H}^2$ is the Hausdorff two-dimensional measure and $B_\delta(\x)\subset \R^3$ is the open ball centered at $\x$ by radius $\delta$. For every $t\ge0$ the unknown $\R^3$-valued  function $\u:\M\times \R_+\rightarrow\R^3$ represents  the displacement vector field. From the point of view of peridynamics, the parameter~$\delta$ takes into account the finite horizon of the nonlocal bond which is governed by the long-range interaction integral~$K$. 

The pairwise force interaction function $\mathbf{f}: (\mathbb{R}^{3}\setminus\{\mathbf 0\})\times\mathbb{R}^3\to \R^3$
is supposed to satisfy the following general constitutive assumptions:

\begin{Assumptions}
\item  $\f\in C^1((\mathbb{R}^{3}\setminus\{\mathbf 0\})\times\mathbb{R}^3;\mathbb{R}^3)$;
\item \label{ass:f.2} {$\f(\x-\x',\u(\x)-\u(x'))=-\f(\x'-\x,\u(\x',t)-\u(x,t)),$ for every $(\x'-\x,\,\u(\cdot,t))\in(\mathbb{R}^{3}\setminus\{\mathbf 0\})\times\mathbb{R}^3$;}
\item \label{ass:f.5} there exists a scalar function $\Phi\in C^2((\mathbb{R}^{3}\setminus\{\mathbf 0\})\times\mathbb{R}^3)$ such that $\f=\nabla_{\u}\Phi$.
\end{Assumptions}

With this respect, assumption~\ref{ass:f.2} can be seen as a counterpart of Newton's Third Laws of Motion (the Action-Reaction Law). Also, assumption~\ref{ass:f.5} states that the material is hyperelastic. Moreover, thanks to \ref{ass:f.5},  the  energy 
\begin{equation}
\label{eq:Egen}
E(t)=\frac{\rho}{2}\int_{\M} |\pt \u(\x,t)|^2d\mathcal{H}^2(\x)+
\frac{1}{2} \int_{\M}\int_{\M\cap B_\delta(\x)}
\Phi(\x-\x', \u(\x,t)-\u(\x',t))\,d\mathcal{H}^2(\x')\,d\mathcal{H}^2(\x)
\end{equation}
is preserved during the motion, i.e. if $\u$ is a smooth solution of \eqref{eq:CP} then $E'(t)=0$.

In this paper we focus on the following choice for $\Phi$,
\begin{equation}
\label{eq:phigen}
{\Phi(\x'-\x,\u(\x')-\u(x))=\frac{\kappa}{p} \frac{|\u(\x')-\u(\x)|^p}{d_{\M}(\x',\x)^{2+\alpha p}}\, , \quad {\mbox{ for every }}\, (\x'-\x,\,\u(\cdot))\in(\mathbb{R}^{3}\setminus\{\mathbf 0\})\times\mathbb{R}^3}\, ,
\end{equation}
where $\kappa,\, p,\,\alpha$ are constants such that
\begin{equation}
\label{eq:constants}
\kappa>0,\qquad 0<\alpha<1,\qquad   p\ge 2,
\end{equation}
and $d_{\M}$ is the geodesic distance on the manifold $\M$.

In light of these assumptions we can write explicitly the operator $K$ as follows
\begin{equation}
\label{eq:operator}
(K\u(\cdot,t))(\x):= {\kappa}\int_{\M\cap B_\delta(\x)}
\frac{|\u(\x',t)-\u(\x,t)|^{p-2}}{d_{\M}(\x',\x)^{2+\alpha p}}(\u(\x',t)-\u(\x,t))\,d\mathcal{H}^2(\x'),\quad {\mbox{ for every }}\,\x\in\M,
\end{equation}
and the energy associated to \eqref{eq:CP} reads:
\begin{equation}
\label{eq:energy-alpha}
E(t)=\frac{\rho}{2}\int_{\M} |\pt \u(\x,t)|^2d\mathcal{H}^2(\x)+
\frac{\kappa}{2p} \int_{\M}\int_{\M\cap B_\delta(\x)}
\frac{|\u(\x,t)-\u(\x',t)|^p}{d_{\M}(\x,\x')^{2+\alpha p}}\,d\mathcal{H}^2(\x')\,d\mathcal{H}^2(\x).
\end{equation}

\subsection{Functional Spaces and Definition of Dissipative Solution}
\label{subsec:spaces}
~\newline
The natural functional  space in which the initial value problem \eqref{eq:CP}  is well posed is the one with finite energy, so 
in this paper we use the space $\mathcal{W}$  defined trough the norm
\begin{equation}
\label{eq:norm}
\norm{\u}_{\mathcal{W}}=\norm{\u}_{L^2}+
\left(\int_{\M}\int_{\M\cap B_\delta(\x)}
\frac{|\u(\x,t)-\u(\x',t)|^p}{d_{\M}(\x,\x')^{2+\alpha p}}\,d\mathcal{H}^2(\x')\,d\mathcal{H}^2(\x)\right)^{1/p}
\end{equation}
which is a closely related to the Sobolev fractional space $W^{\alpha,p}$ \cite{DNPV}.
Arguing as in {\cite[Lemmas 2.1 and 2.2]{Coclite_2018}}, the following result can be proved: 

\begin{lemma}
\label{lm:compembed}
Assume \eqref{eq:constants} and \eqref{eq:operator}. The following statements hold
\begin{itemize}
\item[$i$)] $\mathcal{W}\hookrightarrow\hookrightarrow L^2(\M;\R^3).$
\item[$ii$)] For every $\u,\,\bv\in\mathcal{W}$ we have that $(K\u)\bv\in L^1(\M).$
\item[$iii$)]  For every sequence $\{\u_h\}_h\subset \mathcal{W}$ and $\u\in\mathcal{W}$, if $
\u_h\weak \u$ weakly in $\mathcal{W}$, then $(K\u_h) \to (K\u)$ in the sense of distributions on $\M$.
\end{itemize}
\end{lemma}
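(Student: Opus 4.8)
The plan is to follow, adapting to the closed surface $\M$, the strategy of \cite[Lemmas 2.1 and 2.2]{CDMV}. I would first record two elementary consequences of the smoothness and compactness of $\M$: the geodesic and chordal distances are comparable, $c_{\M}|\x-\x'|\le d_{\M}(\x-\x')\le C_{\M}|\x-\x'|$ on $\M\times\M$; and the contribution of the region $\{|\x-\x'|\ge\delta\}$ to the intrinsic $W^{\alpha,p}(\M)$ seminorm is at most $C\,\delta^{-(2+\alpha p)}\norm{\u}_{L^p(\M)}^p$. Together with a fractional Poincaré--Wirtinger inequality applied on each ball of a finite cover of $\M$ by geodesic balls of radius $r<\delta/2$ (the mean values being bounded by $\norm{\u}_{L^1}\le C\norm{\u}_{L^2}$), these give $\norm{\u}_{L^p(\M)}\le C\norm{\u}_{\mathcal{W}}$ and hence the continuous embedding $\mathcal{W}\hookrightarrow W^{\alpha,p}(\M)$. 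For $i$) it then remains to invoke the fractional Rellich--Kondrachov theorem on the compact manifold $\M$: since $p\ge2$ and $0<\alpha<1$ one has $2<p^{\ast}_{\alpha}=\tfrac{2p}{2-\alpha p}$ when $\alpha p<2$ (because $2<p(1+\alpha)$), and $W^{\alpha,p}(\M)\hookrightarrow\hookrightarrow L^q(\M)$ for every $q<\infty$ when $\alpha p\ge2$; in all cases $W^{\alpha,p}(\M)\hookrightarrow\hookrightarrow L^2(\M;\R^3)$, so $i$) follows by composition.

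For $ii$), using the antisymmetry \ref{ass:f.2} and Fubini I would rewrite, first for smooth fields and then by density in $\mathcal{W}$, the pairing as the symmetric double integral
\begin{equation*}
\int_{\M}(K\u)\cdot\bv\, d\mathcal{H}^2
=-\frac{\kappa}{2}\int_{\M}\int_{\M\cap B_\delta(\x)}
\frac{|\u(\x')-\u(\x)|^{p-2}\bigl(\u(\x')-\u(\x)\bigr)\cdot\bigl(\bv(\x)-\bv(\x')\bigr)}{d_{\M}(\x-\x')^{2+\alpha p}}\, d\mathcal{H}^2(\x')\, d\mathcal{H}^2(\x),
\end{equation*}
and then bound its right-hand side in absolute value by Hölder's inequality with exponents $\tfrac{p}{p-1}$ and $p$, splitting the weight as $d_{\M}^{-(2+\alpha p)}=d_{\M}^{-(2+\alpha p)(p-1)/p}\cdot d_{\M}^{-(2+\alpha p)/p}$. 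This gives $\bigl|\int_{\M}(K\u)\cdot\bv\, d\mathcal{H}^2\bigr|\le\tfrac{\kappa}{2}\norm{\u}_{\mathcal{W}}^{p-1}\norm{\bv}_{\mathcal{W}}$, and in particular the double integrand belongs to $L^1(\M\times\M)$, which is the precise meaning of $(K\u)\bv\in L^1(\M)$.

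For $iii$), if $\u_h\weak\u$ in $\mathcal{W}$ then $\{\u_h\}$ is bounded in $\mathcal{W}$, hence in $W^{\alpha,p}(\M)$ by $i$), so along a subsequence (which by the standard argument suffices) $\u_h\to\u$ in $L^p(\M)$ and a.e.\ on $\M$. Fix $\fhi\in C^\infty(\M;\R^3)$, write $\langle K\u_h,\fhi\rangle$ in the symmetrized form above, and split the weight as in $ii$) by setting
\begin{equation*}
\w_h:=\frac{|\u_h(\x')-\u_h(\x)|^{p-2}\bigl(\u_h(\x')-\u_h(\x)\bigr)}{d_{\M}(\x-\x')^{(2+\alpha p)(p-1)/p}}\,\mathbf{1}_{B_\delta(\x)}(\x'),\qquad
\Psi:=\frac{\fhi(\x)-\fhi(\x')}{d_{\M}(\x-\x')^{(2+\alpha p)/p}}\,\mathbf{1}_{B_\delta(\x)}(\x').
\end{equation*}
Then $\{\w_h\}$ is bounded in $L^{p/(p-1)}(\M\times\M;\R^3)$, with $\norm{\w_h}_{L^{p/(p-1)}(\M\times\M)}\le\norm{\u_h}_{\mathcal{W}}^{p-1}$, and converges a.e.\ on $\M\times\M$ to the analogous field $\w$ built from $\u$; since $1<\tfrac{p}{p-1}<\infty$, a.e.\ convergence together with the uniform bound force $\w_h\weak\w$ weakly in $L^{p/(p-1)}$. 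Moreover $\Psi\in L^p(\M\times\M;\R^3)$, because $\fhi$ is Lipschitz on $\M$, whence $|\fhi(\x)-\fhi(\x')|^p\le C\,d_{\M}(\x-\x')^p$ and $\int_{\M}\int_{\M\cap B_\delta(\x)}d_{\M}(\x-\x')^{\,p-2-\alpha p}\, d\mathcal{H}^2(\x')\,d\mathcal{H}^2(\x)<\infty$ (as $p(1-\alpha)>0$). Testing the weak convergence against this fixed $\Psi$ yields $\langle K\u_h,\fhi\rangle=-\tfrac{\kappa}{2}\int_{\M\times\M}\w_h\cdot\Psi\to-\tfrac{\kappa}{2}\int_{\M\times\M}\w\cdot\Psi=\langle K\u,\fhi\rangle$, which is $iii$).

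I expect the only genuine difficulty to lie in $iii$): the field $\w_h$ is nonlinear in $\u_h$ and is not dominated by a fixed integrable function, so dominated convergence is unavailable, and the limit must instead be obtained from the weak compactness of $\{\w_h\}$ in $L^{p/(p-1)}$, with the weak limit identified through the a.e.\ convergence of $\u_h$ --- a standard but essential manoeuvre --- after which linearity against the fixed smooth weight $\Psi$ closes the argument. The fractional Sobolev prerequisites used in $i$) (the Poincaré--Wirtinger inequality on geodesic balls and the Rellich--Kondrachov embedding on the compact $\M$) are classical and could equally well be quoted from the literature, e.g.\ through a partition of unity and \cite{DNPV}.
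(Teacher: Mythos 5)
Your proposal is correct and follows the same route the paper intends: the paper itself gives no proof of Lemma~\ref{lm:compembed} but only defers to \cite[Lemmas 2.1 and 2.2]{CDMV}, and the strategy you carry out --- comparing geodesic and chordal distances on the compact $\M$, getting $\mathcal{W}\hookrightarrow W^{\alpha,p}(\M)$ via a Poincar\'e--Wirtinger patching argument plus fractional Rellich--Kondrachov for $i$), symmetrizing the pairing and applying H\"older for $ii$), and combining a.e.\ convergence with $L^{p/(p-1)}$ boundedness to obtain weak convergence of the nonlinear field against the fixed $\Psi\in L^p$ for $iii$) --- is exactly the CDMV-style argument the authors have in mind (and the symmetrization identity reappears verbatim in their proof of Proposition~\ref{lm:exists}). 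The only blemish is a sign: symmetrizing gives $+\tfrac{\kappa}{2}\int_{\M}\int_{\M\cap B_\delta(\x)}\tfrac{|\u(\x')-\u(\x)|^{p-2}(\u(\x')-\u(\x))\cdot(\bv(\x)-\bv(\x'))}{d_{\M}(\x-\x')^{2+\alpha p}}\,d\mathcal{H}^2(\x')\,d\mathcal{H}^2(\x)$, not $-\tfrac{\kappa}{2}$, but this has no bearing on the estimates.
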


On the initial data and the source we assume
\begin{equation}
\label{eq:assinit}
\u_0\in\mathcal{W},
\qquad \vv_0\in L^2(\M;\R^3),\qquad \b\in L^1_{loc}(0,{T};L^2(\M;\R^3))\, , T>0\, .
\end{equation}

Finally, in order to improve the readability we define the following space
\begin{equation*}
\mathcal{X}=\left\{\u:\M\times [0,\infty) \to\mathbb{R}^3;\>\>
\begin{aligned}& \u \in L^\infty(0, T ; \mathcal{W} ),\, T > 0 \\
&\pt\u\in L^\infty(0, T ; L^2(\M;\R^3)),\, T > 0\end{aligned}\right\}
\end{equation*}

Let us introduce the following notion of weak solution for \eqref{eq:CP}.

\begin{definition}
\label{def:sol-alpha}
A function $\u:\M\times[0,\infty)\to\R^3$ is a dissipative solution of \eqref{eq:CP} if
\begin{equation}
\label{eq:sol-weak-space}
\u\in\mathcal{X}
\end{equation}
{and, for every test function $\vfi\in C^\infty(\M\times\R;\R^3)$ with compact support, the following hold true}
\begin{equation}
\label{eq:sol-weak-alpha}
\begin{split}
\int_0^\infty\!\!\!\!\int_{\M}& (\rho\u(\x,t)\ptt\vfi(\x,t)-(K \u(\cdot,t))(\x)\vfi (\x,t) )\,dt\,d\mathcal{H}^2(\x)\\
&-\int_{\M} \rho\vv_0(\x)\vfi(\x,0)d\mathcal{H}^2(\x)+\int_{\M} \rho\u_0(\x)\pt\vfi(\x,0)d\mathcal{H}^2(\x)\\
=&\int_0^\infty\!\!\!\!\int_{\M} \b(\x,t)\vfi(\x,t) \,dt\,d\mathcal{H}^2(\x)
\end{split}
\end{equation}
and 
\begin{equation}
\label{eq:sol-diss-alpha}
E(t)\le \left(\sqrt{E(0)}+\displaystyle\frac{1}{2}\int_0^t\norm{\b(\cdot,s)}_{L^2}ds\right)^2,
\end{equation}
for every $t\ge0$, where
\begin{equation*}
 E(0)=\frac{\rho}{2}\int_{\M} |\vv_0|^2d\mathcal{H}^2(\x)+
\frac{\kappa}{2p} \int_{\M}\int_{\M\cap B_\delta(\x)}
\frac{|\u_0(\x)-\u_0(\x')|^p}{d_{\M}(\x,\x')^{2+\alpha p}}\,d\mathcal{H}^2(\x')\,d\mathcal{H}^2(\x).
\end{equation*}
\end{definition}

{In the next (sub)section, we will prove the existence of dissipative solutions of peridynamics equation of motion and we will show that the proposed semi-discrete approximation converges to such solution.}

\subsection{Discretization and convergence results}
\label{subsec:num+main}

In this section, we are going to exploit a semi-discrete numerical scheme for \eqref{eq:CP} on a triangular mesh whose nodes lays on $\M$. Such mesh is obtained by \textcolor{red}{evenly} distributing a finite number of vertices on the manifold, thus returning a triangular discretization with characteristic size $\Delta x$. \textcolor{red}{As widely known, such discretization is obtainable (for an a-priori-fixed number of vertices) onto manifolds diffeomorphic to a sphere. While, in general, a tessellation based on as-equilateral-as-possible triangles can be obtained for connected manifolds by minimally varying the desired number of vertices or the triangles' characteristic size $\Delta x$. The latter corresponds to a reference length since there is no requirement for the similarity between different triangles. Moreover,} due to the nonlocal nature of \eqref{eq:CP}, $\Delta x$ must be selected coherently with the size of the horizon $\delta$. Note that, for $p > 2$, in the limit $\delta, \Delta x \to 0$ the proposed scheme should reasonably guarantee the asymptotic compatibility to the local nonlinear elasticity description, in the same spirit of~\cite{tian2014}. However, the detailed investigation of this limit will be object of future works.

{Let $\mathcal{N}\subset\M$ be a countable dense subset and for every $n\in\N\setminus\{0\}$ let 
$$\mathcal{N}_n\subset \mathcal{N}\quad\text{be such that}\quad\#(\mathcal{N}_n)=n+1,\>\>\>\mathcal{N}_n\subset\mathcal{N}_{n+1},\>\>\>\bigcup_{n}\mathcal{N}_n=\mathcal{N}.$$
We label $\mathcal{N}_n$ as follows
$$\mathcal{N}_n=\big\{\x_{n,i}\in\mathcal{N},\, i\in\{0,...,n\}\big\}.$$
Clearly, $$i\neq \ell\Rightarrow \x_{n,i}\neq\x_{n,\ell}.$$
\textcolor{red}{Following \cite{MR3620141}, we assume that there exist an admissible triangulation
$T_0,..., T_{k_n}\subset \R^3$, whose vertices belong to $\mathcal{N}_n$ and saturate it, such that
their interiors are pairwise disjoint. For every $i\in\{0,..., k_n\}$, let $h_{T_i}$ be the diameter of $T_i$ and $\rho_{T_i}$ be the  diameter of the largest ball contained in $T_i$.
We require that the triangulation is regular and localquasi-uniform, namely
\begin{align}
\label{eq:reg}
&\exists\sigma>0\>\text{s.t.}\>\forall n\in\N\setminus\{0\}\>\forall i\in\{0,..., k_n\}:h_{T_i}\le \sigma \rho_{T_i},\\
\label{eq:locqu}
&\exists\lambda>0\>\text{s.t.}\>\forall n\in\N\setminus\{0\}\>\forall i,j\in\{0,..., k_n\},\,i\not=j:h_{T_i}\le \lambda h_{T_j}.
\end{align}}

We set 
$$\M_n=\bigcup_{j=0}^{k_n}T_j\, ,$$
being, indeed, $\M_n$ the discrete approximation of $\M$ composed by the $k_n$ triangles generated through $\mathcal{N}_n$.}

In order to discretize \eqref{eq:CP} let us define the approximations $\u_{0,n}$, $\vv_{0,n}$ and $\b_n$   of $\u_0$, $\vv_0$ and $\b$, such that
\begin{equation}
\label{eq:assinitial_n} 
\begin{split}
&\{\u_{0,n}\}_n\subset C(\M_n;\R^3)\cap \mathcal{W},\quad \{\vv_{0,n}\}_n\subset C(\M_n;\R^3),\quad \{\b_{n}\}_n\subset C([0,\infty)\times\M_n;\R^3),\\
&\text{$\u_{0,n}\to\u_{0}$ a.e. and in $\mathcal{W}$ as $n\to\infty$},\\
&\text{$\vv_{0,n}\to\vv_{0}$ a.e. and in $L^2(\M;\R^3)$ as $n\to\infty$},\\
&\text{$\b_{n}\to\b$ a.e. and in $L^1_{loc}(0,\infty;L^2(\M;\R^3))$ as $n\to\infty$}.
\end{split}
\end{equation}

We look for the time dependent $\R^3$ valued functions
\begin{equation}
\label{eq:unij}
t \ge 0 \mapsto \{\u_{n,i}(t)\}_{i=0,...,n}
\end{equation}
solving the following ODEs system
\begin{equation}
\label{eq:CPnum}
\begin{cases}
\ddot \u_{n,i}(t)=(K_{n}\u_n(\cdot,t))(\x_{n,i})+\b_n(\x_{n,i},t),&\quad  t >0,\\
\u_{n,i}(0)=\u_{0,n}(\x_{n,i}),\:\dot \u_{n,i}(0)=\mathbf{v}_{0,n}(\x_{n,i}),&{}
\end{cases}
\end{equation}
where 
$\u_n:{\M_n}\times[0,\infty)\to\R^3$ is the unique continuous piecewise affine function  on ${\M_n}$ such that 
\begin{equation}
\label{eq:franco}
\begin{split}
&\u_n(\x_{n,i},t)=\u_{n,i}(t),\qquad \hbox{\rm for every } {i=0,...,n},\\
&\text{{$\u_n(\cdot,t)$ affine on all triangles $T_0,...,T_{k_n}$}}
%$(x_{i,j},\,x_{i+1,j},\,x_{i,j+1})$ and $(x_{i,j+1},\,x_{i+1,j},\,x_{i+1,j+1})$
\end{split}
\end{equation}
and 
\begin{align}
\label{eq:operatornum}
&(K_{n}\u_n(\cdot,t))(\x_{n,i}):= {\kappa}\,{\sum_{j\in \tau_{n,i}}\sum_{\x_{n,\ell}\in T_j\cap \mathcal{N}_n}\frac{\mathcal{H}^2(T_j)}{3}}
\frac{|\u_{n,\ell}(t)-\u_{n,i}(t)|^{p-2}}{d_{\M}(\x_{n,\ell},\x_{n,i})^{2+\alpha p}}(\u_{n,\ell}(t)-\u_{n,i}(t)),\\
\label{eq:ball}
& {{B}_{n,\delta}(\x_{n,i}):=\{\x_{n,\ell}:d_{\M}(\x_{n,\ell},\x_{n,i})<\delta\}},\\
\label{eq:tringles}
&{\tau_{n,i}:=\Big\{j\in\{0,...,k_n\}: T_j\cap\mathcal{N}_n\subset  {B}_{n,\delta}(\x_{n,i})\Big\}}.
\end{align}
%$\u_{0,n}$, $\vv_{0,n}$ and $\b_n$  are approximations of $\u_0$, $\vv_0$ and $\b$ such that
%\begin{equation}
%\label{eq:assinitial_n} 
%\begin{split}
%&\{\u_{0,n}\}_n\subset C(\M_n;\R^3)\cap \mathcal{W},\quad \{\vv_{0,n}\}_n\subset C(\M_n;\R^3),\quad \{\b_{n}\}_n\subset C([0,\infty)\times\M_n;\R^3),\\
%&\text{$\u_{0,n}\to\u_{0}$ a.e. and in $\mathcal{W}$ as $n\to\infty$},\\
%&\text{$\vv_{0,n}\to\vv_{0}$ a.e. and in $L^2(\M;\R^3)$ as $n\to\infty$},\\
%&\text{$\b_{n}\to\b$ a.e. and in $L^1_{loc}(0,\infty;L^2(\M;\R^3))$ as $n\to\infty$}.
%\end{split}
%\end{equation}

The existence and uniqueness of the solution  $\u_n$ to \eqref{eq:CPnum} follows by the  Cauchy-Lipchitz Theorem.

We have the following existence and convergence results.

\begin{theorem}
\label{th:main} Assume \eqref{eq:constants} and \eqref{eq:assinit}.
There exist a function $\u\in\mathcal{X}$ and a subsequence $\{\u_{n_h}\}_h$ of $\{\u_n\}_n$ defined in  \eqref{eq:franco} such that
\begin{itemize}
\item[{\bf (A)}]
$\u_{n_h}\to \u$ a.e. and in $L^q(0,T;L^2(\M;\R^3)),\,T>0,\,1\le q<\infty,$ as $h\to\infty$,
\item[{\bf (B)}]
$\u$ is a dissipative solution of \eqref{eq:CP} in the sense of Definition \ref{def:sol-alpha}.
\end{itemize}
Moreover, if $p=2$, we have also that
\begin{itemize}
\item[{\bf (C)}] $\u$ is the unique dissipative solution of \eqref{eq:CP} in the sense of Definition \ref{def:sol-alpha},
\item[{\bf (D)}] if $\u$ and $\wu$
 are the unique dissipative solutions of \eqref{eq:CP} in the sense of Definition \ref{def:sol-alpha}
obtained in correspondence of the initial data  $(\u_0,\,\bv_0)$ and $(\wu_0,\,\widetilde{\bv}_0)$,
the following estimate holds
\begin{equation}
\label{eq:stab}
\begin{split}
\frac{\rho}{2}\int_{\M}& |\pt \u(\x,t)-\pt\wu(\x,t) |^2d\mathcal{H}^2(\x)+\\
&+\frac{\kappa}{2p} \int_{\M}\int_{\M\cap B_\delta(\x)}
\frac{|(\u(\x,t)-\wu(\x,t))-(\u(\x',t)-\wu(\x',t))|^p}{d_{\M}(\x,\x')^{2+\alpha p}}\,d\mathcal{H}^2(\x')\,d\mathcal{H}^2(\x)\\
\le&\frac{\rho}{2}\int_{\M} |\pt \bv_0(\x)-\pt\widetilde{\bv}_0(\x) |^2d\mathcal{H}^2(\x)+\\
&+\frac{\kappa}{2p} \int_{\M}\int_{\M\cap B_\delta(\x)}
\frac{|(\u_0(\x)-\wu_0(\x))-(\u_0(\x')-\wu_0(\x'))|^p}{d_{\M}(\x,\x')^{2+\alpha p}}\,d\mathcal{H}^2(\x')\,d\mathcal{H}^2(\x)\\
\end{split}
\end{equation}
for every $t\ge0$.
\end{itemize}
\end{theorem}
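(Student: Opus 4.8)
The plan is to establish the four claims \textbf{(A)}--\textbf{(D)} via the usual compactness-and-energy strategy for dissipative solutions. The starting point is a uniform energy estimate for the semi-discrete scheme \eqref{eq:CPnum}. Multiplying the ODE at node $(i,j)$ by $\dot{\u}_{n,i,j}$, summing over $(i,j)$ with the weight $n^{-2}$, and exploiting the symmetry \ref{ass:f.2} of $\f$ (so that the discrete interaction sum telescopes into the time derivative of the discrete potential energy), I would obtain that the discrete energy $E_n(t)$ satisfies the same Gr\"onwall-type bound as in \eqref{eq:sol-diss-alpha}, namely $\sqrt{E_n(t)}\le \sqrt{E_n(0)}+\tfrac12\int_0^t\|\b_n(\cdot,s)\|_{L^2}ds$. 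Combined with \eqref{eq:assinitial_n}, this gives uniform-in-$n$ bounds for $\u_n$ in $L^\infty(0,T;\mathcal{W})$ and for $\pt\u_n$ in $L^\infty(0,T;L^2(\M;\R^3))$; here one must check that the piecewise-affine interpolant $\u_n$ inherits the $\mathcal{W}$-bound from its nodal values, which is where the equispaced-grid hypothesis on the $\x_{n,i,j}$ and the Riemann-sum structure of $K_n$ are used. Existence of solutions to the ODE system \eqref{eq:CPnum} on $[0,\infty)$ follows from Cauchy--Lipschitz plus this a priori bound (the right-hand side is $C^1$ in $\u$ away from coincident displacements, and the energy bound prevents blow-up).

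Next I would extract the convergent subsequence. From the uniform bounds, the compact embedding $\mathcal{W}\hookrightarrow\hookrightarrow L^2(\M;\R^3)$ of Lemma~\ref{lm:compembed}$(i)$ together with an Aubin--Lions--Simon argument (using the bound on $\pt\u_n$ to control time-translations) yields a subsequence $\u_{n_h}\to\u$ strongly in $C(0,T;L^2(\M;\R^3))$, hence a.e. and in every $L^q(0,T;L^2)$ for $1\le q<\infty$; weak-$\star$ limits place $\u$ in $\mathcal{X}$. This proves \textbf{(A)}. For \textbf{(B)}, I would pass to the limit in the weak formulation: the $\rho\u_n\ptt\vfi$ term and the source term pass by strong $L^2$ convergence; the delicate term is $\int\!\!\int (K_n\u_n)\vfi$, which I would handle by first replacing $K_n$ with the continuous operator $K$ applied to the interpolant (controlling the Riemann-sum error using the $C^1$-regularity of $\f$ and the $\mathcal{W}$-bound, with care near the singularity of $d_\M(\cdot)^{-(2+\alpha p)}$ on the thin region where bonds are short), and then invoking Lemma~\ref{lm:compembed}$(iii)$ to get $K\u_{n_h}\to K\u$ in the sense of distributions. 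The energy inequality \eqref{eq:sol-diss-alpha} then follows by lower semicontinuity: $E(0)=\lim E_n(0)$ by \eqref{eq:assinitial_n}, and $E(t)\le\liminf E_n(t)$ by weak lower semicontinuity of the $L^2$-norm of $\pt\u_n$ and of the convex nonlocal functional (Fatou plus a.e. convergence), so the discrete bound passes to the limit.

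For \textbf{(C)} and \textbf{(D)}, which require $p=2$, the governing equation becomes linear in $\u$, since $|\u(\x')-\u(\x)|^{p-2}=1$ and $K$ is then a bounded self-adjoint nonpositive operator on a suitable space. I would prove the stability estimate \eqref{eq:stab} first: given two dissipative solutions $\u,\wu$ with data $(\u_0,\bv_0)$, $(\wu_0,\widetilde\bv_0)$, the difference $\mathbf{w}=\u-\wu$ solves the homogeneous linear equation $\rho\ptt\mathbf{w}=K\mathbf{w}$ (distributionally), and testing against $\pt\mathbf{w}$—made rigorous by the regularity $\mathbf{w}\in\mathcal{X}$, mollification in time, and the self-adjointness of $K$—shows that the quadratic energy of $\mathbf{w}$ (the left-hand side of \eqref{eq:stab}) is non-increasing, hence bounded by its value at $t=0$ (the right-hand side). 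Uniqueness \textbf{(C)} is the special case $\u_0=\wu_0$, $\bv_0=\widetilde\bv_0$, $\mathbf{w}\equiv 0$. I expect the main obstacle to be the rigorous justification of the energy identity for $\mathbf{w}$ at the level of dissipative solutions (where $\ptt\mathbf{w}$ and $\pt\mathbf{w}$ are only distributional / $L^\infty$-in-time), which requires a careful regularization argument together with the weak-continuity properties of $K$ from Lemma~\ref{lm:compembed}; a secondary technical point is the control of the Riemann-sum/interpolation error for $K_n$ uniformly near the singular kernel, needed for \textbf{(B)}.
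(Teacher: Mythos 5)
Your proposal follows essentially the same route as the paper: a discrete energy estimate obtained by testing with $\dot{\u}_{n,i,j}$ and exploiting the antisymmetry to telescope, compactness via the embedding $\mathcal{W}\hookrightarrow\hookrightarrow L^2(\M;\R^3)$ from Lemma~\ref{lm:compembed}, passage to the limit in the weak formulation by splitting the nonlocal term into a discretization/Riemann-sum error plus the limit handled by Lemma~\ref{lm:compembed}$(iii)$, and lower semicontinuity for the energy inequality. For \textbf{(C)} and \textbf{(D)} the paper simply cites \cite[Section 4]{CDMV} and notes that \textbf{(C)} follows from \textbf{(D)}, which is consistent with your linearity-based energy argument for the difference $\mathbf{w}=\u-\wu$ when $p=2$, including the regularization step you flag as the main technical point.
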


\section{\bf Proof of Theorem \ref{th:main}}
\label{sec:estimates}

The proof of Theorem \ref{th:main} follows after the following  preliminary results.

\begin{proposition}
\label{lm:exists}
Assume \eqref{eq:constants} and \eqref{eq:assinit}. There exist a function $\u\in\mathcal{X}$ and a subsequence $\{\u_{n_h}\}_h$ of $\{\u_n\}_n$ defined in  \eqref{eq:franco} such that
\begin{align}
\label{eq:exist1}
&\text{$\u_{n_h}\to \u$ a.e. and in $L^q(0,T;L^2(\M;\R^3)),\,T>0,\,1\le q<\infty,$ as $h\to\infty$,}\\
\label{eq:exist2} &\text{$\u$ is a dissipative solution of \eqref{eq:CP} in the sense of Definition \ref{def:sol-alpha}.}
\end{align}
\end{proposition}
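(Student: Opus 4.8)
The plan is to establish Proposition \ref{lm:exists} by the classical compactness route for semi-discrete approximations of hyperbolic problems: derive a uniform energy estimate for the ODE system \eqref{eq:CPnum}, extract a convergent subsequence, and pass to the limit in the weak formulation. First, I would verify that the system \eqref{eq:CPnum} has a global-in-time solution: the right-hand side $K_n$ is locally Lipschitz in the variables $\{\u_{n,\ell,k}\}$ away from coincidence points (since $\f\in C^1$ by \ref{ass:f.2}), so Picard--Lindel\"of gives local existence, and the energy bound below prevents blow-up, yielding a solution on $[0,\infty)$. Next comes the heart of the matter: the \emph{discrete energy estimate}. Define the discrete energy
\begin{equation*}
E_n(t)=\frac{\rho}{2n^2}\sum_{i,j}|\dot\u_{n,i,j}(t)|^2+\frac{\kappa}{2pn^4}\sum_{i,j}\sum_{(\ell,k)\in\mathcal{B}_{n,\delta}(\x_{n,i,j})}\frac{|\u_{n,i,j}(t)-\u_{n,\ell,k}(t)|^p}{d_{\M}(\x_{n,\ell,k}-\x_{n,i,j})^{2+\alpha p}}.
\end{equation*}
Differentiating in $t$, multiplying the $(i,j)$-equation by $\dot\u_{n,i,j}$, summing, and using the antisymmetry \ref{ass:f.2} of the kernel together with the symmetry of $\mathcal{B}_{n,\delta}$ (which makes the double sum telescope, exactly as in the continuous energy identity), one gets $E_n'(t)=\frac{\rho}{n^2}\sum_{i,j}\b_n(\x_{n,i,j},t)\cdot\dot\u_{n,i,j}(t)\le \sqrt{2\rho}\,\norm{\b_n(\cdot,t)}_{L^2_n}\sqrt{E_n(t)}$ up to a quadrature factor; a Gr\"onwall/Bihari argument then yields $\sqrt{E_n(t)}\le \sqrt{E_n(0)}+\frac12\int_0^t\norm{\b_n(\cdot,s)}ds$, and by \eqref{eq:assinitial_n} the right side is bounded uniformly in $n$ on compact time intervals. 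This gives the uniform bound for $\{\u_n\}$ in $L^\infty_{loc}(0,\infty;\mathcal{W})$ and $\{\pt\u_n\}$ in $L^\infty_{loc}(0,\infty;L^2(\M;\R^3))$.

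With these bounds in hand, I would extract the limit. The uniform $\mathcal{W}$-bound plus the compact embedding $\mathcal{W}\hookrightarrow\hookrightarrow L^2(\M;\R^3)$ from Lemma \ref{lm:compembed}$(i)$, combined with the uniform bound on $\pt\u_n$ in $L^\infty_{loc}(0,\infty;L^2)$, puts us in position to apply an Aubin--Lions--Simon compactness lemma: $\{\u_n\}$ is relatively compact in $C([0,T];L^2(\M;\R^3))$, hence (along a subsequence $n_h$) $\u_{n_h}\to\u$ strongly in $C([0,T];L^2)$ and a.e., and by interpolation with the $L^\infty$ bound also in every $L^q(0,T;L^2)$, $1\le q<\infty$; simultaneously $\u_{n_h}\weak\u$ weak-$\star$ in $L^\infty_{loc}(0,\infty;\mathcal{W})$ and $\pt\u_{n_h}\weak\pt\u$ weak-$\star$ in $L^\infty_{loc}(0,\infty;L^2)$, so $\u\in\mathcal{X}$. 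Lower semicontinuity of the energy functional under these convergences gives the dissipation inequality \eqref{eq:sol-diss-alpha} from the discrete one, using \eqref{eq:assinitial_n} to identify $E_{n_h}(0)\to E(0)$.

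It remains to pass to the limit in the weak formulation \eqref{eq:sol-weak-alpha}. Testing \eqref{eq:CPnum} against (the nodal values of) a fixed $\vfi\in C^\infty(\R^4;\R^3)$ of compact support, multiplying by $1/n^2$, summing, and integrating by parts twice in $t$ produces a discrete analogue of \eqref{eq:sol-weak-alpha}: the kinetic term and the source term converge by the strong $L^2$ convergence of $\u_{n_h}$ and the a.e./$L^1_{loc}$ convergence of $\b_n$, while the initial-data terms converge by \eqref{eq:assinitial_n}. The Riemann-sum quadrature errors (difference between $\frac{1}{n^2}\sum_{i,j}$ and $\int_\M$, and between the piecewise-affine interpolant $\u_n$ and its nodal samples) vanish because $\vfi$ is smooth and $\u_n$ is bounded in $\mathcal{W}$. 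The one genuinely delicate point — and the step I expect to be the main obstacle — is the convergence of the nonlocal force term $\sum(K_n\u_n)(\x_{n,i,j})\vfi(\x_{n,i,j},t)/n^2\to\int\!\!\int(K\u)\vfi$: this requires controlling the nonlinear singular kernel $|\u(\x')-\u(\x)|^{p-2}/d_\M(\x-\x')^{2+\alpha p}$ under only weak convergence in $\mathcal{W}$. Here I would invoke Lemma \ref{lm:compembed}$(iii)$, which states precisely that $\u_h\weak\u$ in $\mathcal{W}$ implies $K\u_h\to K\u$ in $\mathcal{D}'(\M)$, together with the a.e. convergence $\u_{n_h}\to\u$ to handle the passage from the discrete operator $K_n$ to $K$ (splitting $K_n\u_n-K\u=(K_n\u_n-K\u_n)+(K\u_n-K\u)$, the first piece being a quadrature error controlled by the $\mathcal{W}$-bound, the second handled by Lemma \ref{lm:compembed}$(iii)$), and using Lemma \ref{lm:compembed}$(ii)$ for the needed integrability $(K\u)\vfi\in L^1$. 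Assembling these convergences shows $\u$ satisfies \eqref{eq:sol-weak-alpha}, completing the proof of \eqref{eq:exist1}--\eqref{eq:exist2}.
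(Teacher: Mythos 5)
Your proposal matches the paper's proof in all essential respects: a discrete energy estimate followed by a compactness extraction via the embedding of Lemma~\ref{lm:compembed}($i$), then passage to the limit in the weak formulation by splitting the nonlocal term into quadrature errors plus the weak continuity of $K$ from Lemma~\ref{lm:compembed}($iii$), with lower semicontinuity giving the dissipation inequality. The only detail the paper makes explicit that you gloss over is a separate discrete $L^2$ estimate (the energy bound controls only the $\mathcal{W}$-seminorm and $\dot\u_n$, so one integrates $\dot\u_n$ in time to bound $\|\u_n\|_{L^2}$), but this is an elementary computation and your outline is otherwise the same argument.
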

The proof of the previous proposition will be given after some lemmas.

We will denote with $C$ and $c$ all the constants independent {of $\delta$}.

\begin{lemma}[{\bf Discrete energy estimate}]
\label{lm:enest}
Let $\{\u_{n,i}\}$ be the solution of \eqref{eq:CPnum}. The  estimate 
\begin{equation}
\label{eq:enest}
 E_n(t)\le  \left(\sqrt{E_n(0)}+\frac{1}{\sqrt{2\rho}}\int_0^t\left({\sum_{j=0}^{k_n}\sum_{\x_{i,n}\in T_j\cap\mathcal{N}_n}\frac{\mathcal{H}^2(T_j)}{3}|\b_n(\x_{n,i},t)|^2}\right)^{1/2}dt\right)^2
\end{equation}
holds true for every $t>0$, where
\begin{align*}
E_n(t)=&{\rho\sum_{j=0}^{k_n}\sum_{\x_{i,n}\in T_j\cap\mathcal{N}_n}\frac{\mathcal{H}^2(T_j)}{3}\frac{|\dot \u_{n,i}(t)|^2}{2}+
\frac{\kappa }{p}\sum_{j=0}^{k_n}\sum_{\x_{i,n}\in T_j\cap\mathcal{N}_n}
\sum_{k\in \tau_{n,i}}\sum_{\x_{n,\ell}\in T_k\cap \mathcal{N}_n}
\frac{\mathcal{H}^2(T_j)\mathcal{H}^2(T_k)}{9}
\frac{|\u_{n,\ell}(t)-\u_{n,i}(t)|^{p}}{d_{\M}(\x_{n,\ell},\x_{n,i})^{2+\alpha p}}},\\
E_n(0)=&{\rho\sum_{j=0}^{k_n}\sum_{\x_{i,n}\in T_j\cap\mathcal{N}_n}\frac{\mathcal{H}^2(T_j)}{3}\frac{|\vv_{0,n,i}|^2}{2}+
\frac{\kappa }{p}\sum_{j=0}^{k_n}\sum_{\x_{i,n}\in T_j\cap\mathcal{N}_n}
\sum_{k\in \tau_{n,i}}\sum_{\x_{n,\ell}\in T_k\cap \mathcal{N}_n}
\frac{\mathcal{H}^2(T_j)\mathcal{H}^2(T_k)}{9}
\frac{|\u_{0,n,\ell}-\u_{0,n,i}|^{p}}{d_{\M}(\x_{n,\ell},\x_{n,i})^{2+\alpha p}}.}
\end{align*}
\end{lemma}

\begin{proof}
Multiplying \eqref{eq:CPnum} by $\dot \u_{n,i}$ and summing over $i$ we get
{\begin{equation}
\label{eq:en0}
\begin{split}
\frac{d}{dt}\rho \sum_{j=0}^{k_n}\sum_{\x_{i,n}\in T_j\cap\mathcal{N}_n}\frac{\mathcal{H}^2(T_j)}{3}\frac{|\dot \u_{n,i}(t)|^2}{2}
=&\underbrace{\sum_{j=0}^{k_n}\sum_{\x_{i,n}\in T_j\cap\mathcal{N}_n}\frac{\mathcal{H}^2(T_j)}{3}(K_{n}\u_n(\cdot,t))(\x_{n,i})\cdot \dot \u_{n,i}(t)}_{I_1}\\
&+\underbrace{\sum_{j=0}^{k_n}\sum_{\x_{i,n}\in T_j\cap\mathcal{N}_n}\frac{\mathcal{H}^2(T_j)}{3}\b_n(\x_{n,i},t)\cdot \dot \u_{n,i}(t)}_{I_2}.
\end{split}
\end{equation}}
Using the definition of the operator $K_n$ we get
{\begin{align*}
I_1=&\kappa \sum_{j=0}^{k_n}\sum_{\x_{i,n}\in T_j\cap\mathcal{N}_n}
\sum_{k\in \tau_{n,i}}\sum_{\x_{n,\ell}\in T_k\cap \mathcal{N}_n}
\frac{\mathcal{H}^2(T_j)\mathcal{H}^2(T_k)}{9}
\frac{|\u_{n,\ell}(t)-\u_{n,i}(t)|^{p-2}}{d_{\M}(\x_{n,\ell},\x_{n,i})^{2+\alpha p}}(\u_{n,\ell}(t)-\u_{n,i}(t))\cdot \dot \u_{n,i}(t)\\
%=&\frac{\kappa}{2n^4}\sum_{i=0}^n\sum_{j=0}^{n}
%\sum_{(\ell,k)\in\mathcal{B}_{n,\delta}(\x_{n,i})}
%\frac{|\u_{n,\ell}(t)-\u_{n,i}(t)|^{p-2}}{d_{\M}(\x_{n,\ell},\x_{n,i})^{2+\alpha p}}(\u_{n,\ell}(t)-\u_{n,i}(t))\cdot \dot \u_{n,i}(t)\\
%&+\frac{\kappa}{2n^4}\sum_{i=0}^n\sum_{j=0}^{n}
%\sum_{(\ell,k)\in\mathcal{B}_{n,\delta}(\x_{n,i})}
%\frac{|\u_{n,\ell}(t)-\u_{n,i}(t)|^{p-2}}{d_{\M}(\x_{n,\ell},\x_{n,i})^{2+\alpha p}}(\u_{n,\ell}(t)-\u_{n,i}(t))\cdot \dot \u_{n,i}(t)\\
=&\frac{ \kappa}{2}\sum_{j=0}^{k_n}\sum_{\x_{i,n}\in T_j\cap\mathcal{N}_n}
\sum_{k\in \tau_{n,i}}\sum_{\x_{n,\ell}\in T_k\cap \mathcal{N}_n}
\frac{\mathcal{H}^2(T_j)\mathcal{H}^2(T_k)}{9}
\frac{|\u_{n,\ell}(t)-\u_{n,i}(t)|^{p-2}}{d_{\M}(\x_{n,\ell},\x_{n,i})^{2+\alpha p}}(\u_{n,\ell}(t)-\u_{n,i}(t))\cdot \dot \u_{n,i}(t)\\
&+\frac{\kappa }{2}\sum_{j=0}^{k_n}\sum_{\x_{\ell,n}\in T_j\cap\mathcal{N}_n}
\sum_{k\in \tau_{n,\ell}}\sum_{\x_{n,i}\in T_k\cap \mathcal{N}_n}
\frac{\mathcal{H}^2(T_j)\mathcal{H}^2(T_k)}{9}
\frac{|\u_{n,\ell}(t)-\u_{n,i}(t)|^{p-2}}{d_{\M}(\x_{n,\ell},\x_{n,i})^{2+\alpha p}}(\u_{n,\ell}(t)-\u_{n,i}(t))\cdot \dot \u_{n,\ell}(t)\\
%=&\frac{\kappa}{2n^4}\sum_{i=0}^n\sum_{j=0}^{n}
%\sum_{(\ell,k)\in\mathcal{B}_{n,\delta}(\x_{n,i})}
%\frac{|\u_{n,\ell}(t)-\u_{n,i}(t)|^{p-2}}{d_{\M}(\x_{n,\ell},\x_{n,i})^{2+\alpha p}}(\u_{n,\ell}(t)-\u_{n,i}(t))\cdot \dot \u_{n,i}(t)\\
%&+\frac{\kappa}{2n^4}\sum_{i=0}^n\sum_{j=0}^{n}
%\sum_{(\ell,k)\in\mathcal{B}_{n,\delta}(\x_{n,i})}
%\frac{|\u_{n,\ell}(t)-\u_{n,i}(t)|^{p-2}}{d_{\M}(\x_{n,\ell},\x_{n,i})^{2+\alpha p}}(\u_{n,i}(t)-\u_{n,\ell}(t))\cdot \dot \u_{n,\ell}(t)\\
=&\frac{\kappa }{2}\sum_{j=0}^{k_n}\sum_{\x_{i,n}\in T_j\cap\mathcal{N}_n}
\sum_{k\in \tau_{n,i}}\sum_{\x_{n,\ell}\in T_k\cap \mathcal{N}_n}
\frac{\mathcal{H}^2(T_j)\mathcal{H}^2(T_k)}{9}
\frac{|\u_{n,\ell}(t)-\u_{n,i}(t)|^{p-2}}{d_{\M}(\x_{n,\ell},\x_{n,i})^{2+\alpha p}}(\u_{n,\ell}(t)-\u_{n,i}(t))\cdot( \dot \u_{n,i}(t)-\dot \u_{n,\ell}(t))\\
=&-\frac{d}{dt}\left(\frac{\kappa }{2p}\sum_{j=0}^{k_n}\sum_{\x_{i,n}\in T_j\cap\mathcal{N}_n}
\sum_{k\in \tau_{n,i}}\sum_{\x_{n,\ell}\in T_k\cap \mathcal{N}_n}
\frac{\mathcal{H}^2(T_j)\mathcal{H}^2(T_k)}{9}
\frac{|\u_{n,\ell}(t)-\u_{n,i}(t)|^{p}}{d_{\M}(\x_{n,\ell},\x_{n,i})^{2+\alpha p}}\right).
\end{align*}}
{To estimate $I_2$ we use the discrete H\"older inequality, so that, we get 
\begin{align*}
I_2=&\sum_{j=0}^{k_n}\sum_{\x_{i,n}\in T_j\cap\mathcal{N}_n}\frac{\mathcal{H}^2(T_j)}{3}\b_n(\x_{n,i},t)\cdot \dot \u_{n,i}(t)\\
\le &\sqrt{\frac{2}{\rho}}\left(\sum_{j=0}^{k_n}\sum_{\x_{i,n}\in T_j\cap\mathcal{N}_n}\frac{\mathcal{H}^2(T_j)}{3}|\b_n(\x_{n,i},t)|^2\right)^{1/2}
\left(\rho \sum_{j=0}^{k_n}\sum_{\x_{i,n}\in T_j\cap\mathcal{N}_n}\frac{\mathcal{H}^2(T_j)}{3}\frac{|\dot \u_{n,i}(t)^2}{2}\right)^{1/2}\\
\le &\sqrt{\frac{2}{\rho}}\left(\sum_{j=0}^{k_n}\sum_{\x_{i,n}\in T_j\cap\mathcal{N}_n}\frac{\mathcal{H}^2(T_j)}{3}|\b_n(\x_{n,i},t)|^2\right)^{1/2}
\left(E_n(t)\right)^{1/2}.
\end{align*}}
Therefore, \eqref{eq:en0} gives
{\begin{equation*}
\dot E_n(t)\le \sqrt{\frac{2}{\rho}}\left(\sum_{j=0}^{k_n}\sum_{\x_{i,n}\in T_j\cap\mathcal{N}_n}\frac{\mathcal{H}^2(T_j)}{3}|\b_n(\x_{n,i},t)|^2\right)^{1/2}\left(E_n(t)\right)^{1/2},
\end{equation*}}
that is 
{\begin{equation*}
\frac{d}{dt} \sqrt{E_n(t)}\le \frac{1}{\sqrt{2\rho}}\left(\sum_{j=0}^{k_n}\sum_{\x_{i,n}\in T_j\cap\mathcal{N}_n}\frac{\mathcal{H}^2(T_j)}{3}|\b_n(\x_{n,i},t)|^2\right)^{1/2}.
\end{equation*}}
Integrating over $(0,t)$ we conclude
{\begin{equation*}
 \sqrt{E_n(t)}\le  \sqrt{E_n(0)}+\frac{1}{\sqrt{2\rho}}\int_0^t\left(\sum_{j=0}^{k_n}\sum_{\x_{i,n}\in T_j\cap\mathcal{N}_n}\frac{\mathcal{H}^2(T_j)}{3}|\b_n(\x_{n,i},t)|^2\right)^{1/2}dt.
\end{equation*}}
\end{proof}

\begin{lemma}[{\bf Discrete $L^2$ estimate}]
\label{lm:l2est}
Let $T\ge0$ be given and $\{\u_{n,i}\}$ be the solution of \eqref{eq:CPnum}. The  estimate 
{\begin{equation}
\label{eq:l2est}
\left(\sum_{j=0}^{k_n}\sum_{\x_{i,n}\in T_j\cap\mathcal{N}_n}\frac{\mathcal{H}^2(T_j)}{3}|\u_{n,i}(t)|^2\right)^{1/2}\le C_T(1+t)
\end{equation}}
holds true for every $0<t\le T$.
\end{lemma}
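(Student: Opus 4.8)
The plan is to derive \eqref{eq:l2est} from the discrete energy estimate of Lemma~\ref{lm:enest}. The first step is to check that the right-hand side of \eqref{eq:enest} is bounded uniformly in $n$ for $t\in[0,T]$. The two contributions to $E_n(0)$, namely $\frac{\rho}{n^2}\sum_{i,j}\frac{|\vv_{0,n,i,j}|^2}{2}$ and the nonlocal double sum, are quadrature-type expressions over the mesh $\{\x_{n,i,j}\}$ (whose cells have $\mathcal H^2$-measure comparable to $1/n^2$, by the equal geodesic spacing $1/n$) modeled on $\frac{\rho}{2}\norm{\vv_{0,n}}_{L^2}^2$ and on the nonlocal part of the energy of $\u_{0,n}$; hence they are controlled uniformly in $n$ by $\norm{\vv_{0,n}}_{L^2}$ and $\norm{\u_{0,n}}_{\mathcal W}$, which are bounded by \eqref{eq:assinitial_n}. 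Likewise $\int_0^t\big(\frac1{n^2}\sum_{i,j}|\b_n(\x_{n,i,j},s)|^2\big)^{1/2}\,ds$ is, up to an error vanishing with $n$, equal to $\int_0^t\norm{\b_n(\cdot,s)}_{L^2}\,ds$, which is bounded uniformly in $n$ and in $t\le T$ because $\b_n\to\b$ in $L^1_{loc}(0,\infty;L^2(\M;\R^3))$. Feeding these into \eqref{eq:enest} yields a constant $C_T$ with $E_n(t)\le C_T$ for every $n$ and every $0<t\le T$.

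Next, since the kinetic term is one of the summands of $E_n(t)$, one has $\frac1{n^2}\sum_{i,j}|\dot\u_{n,i,j}(t)|^2\le\frac2\rho E_n(t)\le\frac{2}{\rho}C_T$ for $0<t\le T$. Writing $\u_{n,i,j}(t)=\u_{0,n}(\x_{n,i,j})+\int_0^t\dot\u_{n,i,j}(s)\,ds$ and applying the triangle inequality for the weighted norm $\big(\frac1{n^2}\sum_{i,j}|\cdot|^2\big)^{1/2}$ together with Minkowski's integral inequality gives
\[
\Big(\frac1{n^2}\sum_{i,j}|\u_{n,i,j}(t)|^2\Big)^{1/2}\le\Big(\frac1{n^2}\sum_{i,j}|\u_{0,n}(\x_{n,i,j})|^2\Big)^{1/2}+\int_0^t\Big(\frac1{n^2}\sum_{i,j}|\dot\u_{n,i,j}(s)|^2\Big)^{1/2}\,ds .
\]
The last integral is $\le t\,(2C_T/\rho)^{1/2}$, while $\frac1{n^2}\sum_{i,j}|\u_{0,n}(\x_{n,i,j})|^2$ is again a quadrature sum controlled uniformly in $n$ by $\norm{\u_{0,n}}_{L^2}^2$, which is bounded because $\u_{0,n}\to\u_0$ in $\mathcal W\hookrightarrow L^2(\M;\R^3)$ (Lemma~\ref{lm:compembed}). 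Hence the left-hand side is $\le C_T(1+t)$, i.e.\ \eqref{eq:l2est}. Equivalently, one may differentiate $N_n(t):=\frac1{n^2}\sum_{i,j}|\u_{n,i,j}(t)|^2$, use Cauchy--Schwarz to get $\frac{d}{dt}\sqrt{N_n(t)}\le(2C_T/\rho)^{1/2}$, and integrate over $(0,t)$.

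The only step that is not completely routine is the first one: recognizing the discrete double sum in $E_n(0)$, the sampled source, and $N_n(0)$ as quadrature approximations of the corresponding continuous $L^2$/energy functionals and bounding them uniformly in $n$; this relies on the regularity of the families $\{\x_{n,i,j}\}$ and on the convergences in \eqref{eq:assinitial_n}. Global-in-time existence of $\{\u_{n,i,j}\}$ need not be addressed here, being part of the hypothesis (and, in any case, it follows from the same energy bound, since the right-hand side of \eqref{eq:CPnum} is locally Lipschitz for $p\ge2$). Everything else is the elementary integration argument displayed above.
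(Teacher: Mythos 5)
Your proposal is correct and follows essentially the same route as the paper: both write $\u_{n,i,j}(t)=\u_{0,n,i,j}+\int_0^t\dot\u_{n,i,j}(s)\,ds$, bound the velocity sum via the discrete energy estimate of Lemma~\ref{lm:enest}, and invoke~\eqref{eq:assinitial_n} for the initial data; the only cosmetic difference is that you use Minkowski's integral inequality on the weighted $\ell^2$ norm where the paper applies Cauchy--Schwarz pointwise in $(i,j)$ before squaring and summing.
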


\begin{proof}
Let $t\in(0,T)$ be given.
We observe that
{\begin{equation}
\label{eq:l2.1}
\begin{split}
|\u_{n,i}(t)|\le&|\u_{0,n,i}|+\int_0^t |\dot \u_{n,i}(s)|\,ds\\
\le&|\u_{0,n,i}|+\sqrt{t}\sqrt{\int_0^t |\dot \u_{n,i}(s)|^2\,ds}.\end{split}
\end{equation}}
Taking the square of both sides of~\eqref{eq:l2.1}
and summing over $i$, we have
{\begin{align*}
\sum_{j=0}^{k_n}\sum_{\x_{i,n}\in T_j\cap\mathcal{N}_n}\frac{\mathcal{H}^2(T_j)}{3}|\u_{n,i}(t)|^2\le&\>
2\sum_{j=0}^{k_n}\sum_{\x_{i,n}\in T_j\cap\mathcal{N}_n}\frac{\mathcal{H}^2(T_j)}{3}|\u_{0,n,i}|^2+2t\int_0^t 
\sum_{j=0}^{k_n}\sum_{\x_{i,n}\in T_j\cap\mathcal{N}_n}\frac{\mathcal{H}^2(T_j)}{3}|\dot \u_{n,i}(s)|^2\,ds\\
\le&\>
{2}\sum_{j=0}^{k_n}\sum_{\x_{i,n}\in T_j\cap\mathcal{N}_n}\frac{\mathcal{H}^2(T_j)}{3}|\u_{0,n,i}|^2+
2t^2\sup_{t\in(0,T)}\sum_{j=0}^{k_n}\sum_{\x_{i,n}\in T_j\cap\mathcal{N}_n}\frac{\mathcal{H}^2(T_j)}{3}|\dot \u_{n,i}(t)|^2.
\end{align*}}
Therefore, the desired
estimate follows from~\eqref{eq:assinitial_n} and~\eqref{eq:enest}.
\end{proof}

\textcolor{blue}{
\begin{lemma}
\label{lm:Sobenest}
The sequence $\{\u_n\}_n$ defined in \eqref{eq:franco} is  bounded  in $L^\infty(0,T;\mathcal{W})$, for very $T>0$.
\end{lemma}}

\begin{proof}
\textcolor{blue}{ We begin by proving that 
\begin{equation}
\label{eq:l2affine}
\text{$\{\u_n\}_n$ is a bounded sequence in $L^\infty(0,T;L^2(\M;\R^3))$, for very $T>0$.}
\end{equation}
Due to the definition of $\u_n$ we know that
\begin{equation}
\label{eq:linear}
 |\u_n(\x,t)|\le |\u_{n,i}(t)|+\max_{j\not =i}\frac{ |\u_{n,i}(t)-\u_{n,j}(t)|}{|\x_{n,i}-\x_{n,j}|}|\x_{n,i}-\x|,\qquad \forall\,\x\in T_i.
\end{equation}
Thanks to \eqref{eq:reg}, and \eqref{eq:locqu}  we have 
\begin{align*}
\norm{ \u_n(t,\cdot)}_{L^2(\M;\R^3)}^2=&\int_\M | \u_n(\x,t)|^2d\mathcal{H}^2(\x)
\le c\sum_{i=0}^{k_n}\int_{T_i} | \u_n(\x,t)|^2d\mathcal{H}^2(\x)\\
\le&c\sum_{i=0}^{k_n}\left(\int_{T_i} |\u_{n,i}|^2d\mathcal{H}^2(\x)+\max_{j\not =i}\int_{T_i}\frac{ |\u_{n,i}(t)-\u_{n,j}(t)|^2}{|\x_{n,i}-\x_{n,j}|^2}|\x_{n,i}-\x|^2d\mathcal{H}^2(\x)\right)\\
\le&c\sum_{i=0}^{k_n}\mathcal{H}^2(T_i)\left( |\u_{n,i}(t)|^2+\max_{j\not =i}\frac{ |\u_{n,i}(t)-\u_{n,j}(t)|^2}{\min\limits_\ell\rho_{T_\ell}^2}h_{T_i}^2\right)\\
\le&c\sum_{i=0}^{k_n}\mathcal{H}^2(T_i)\left( |\u_{n,i}(t)|^2+\max_{j\not =i}\frac{ |\u_{n,i}(t)|^2+|\u_{n,j}(t)|^2}{\min\limits_\ell\rho_{T_\ell}^2}h_{T_i}^2\right)\\
\le&c\sum_{i=0}^{k_n}\mathcal{H}^2(T_i) |\u_{n,i}(t)|^2.
\end{align*}
Therefore, \eqref{eq:l2affine} follows from Lemma \ref{lm:l2est}.}

\textcolor{blue}{We continue by estimating the second term in \eqref{eq:norm}.
In order to help the readability of the computations we consider only the case $\delta=\infty$, that is not restrictive in light of the equivalence between $\mathcal{W}$ and $W^{\alpha,p}$.
We split the set of indexes as follows
\begin{equation*}
\{0,...,k_n\}\times \{0,...,k_n\}=\mathcal{I}_{1,n}\cup\mathcal{I}_{2,n}\cup\mathcal{I}_{3,n},
\end{equation*}
where
\begin{align*}
\mathcal{J}_{1,n}=&\big\{(i,j):i=j\big\},\\
\mathcal{J}_{2,n}=&\big\{(i,j):i \not=j,\,\partial T_i\cap \partial T_j\not=\emptyset\big\},\\
\mathcal{J}_{3,n}=&\big\{(i,j):i \not=j,\,\partial T_i\cap \partial T_j=\emptyset\big\}.
\end{align*}
Accordingly we split the second term in \eqref{eq:norm} as follows
\begin{align}
\int_{\M}\int_{\M}&\frac{|\u_n(\x,t)-\u_n(\x',t)|^p}{d_{\M}(\x,\x')^{2+\alpha p}}\,d\mathcal{H}^2(\x')\,d\mathcal{H}^2(\x)\notag\\
=&\sum_{i,j=0}^{k_n}\int_{T_i}\int_{T_j}\frac{|\u_n(\x,t)-\u_n(\x',t)|^p}{d_{\M}(\x,\x')^{2+\alpha p}}\,d\mathcal{H}^2(\x')\,d\mathcal{H}^2(\x)\notag\\
=&\sum_{(i,j)\in \mathcal{J}_{1,n}}\int_{T_i}\int_{T_j}\frac{|\u_n(\x,t)-\u_n(\x',t)|^p}{d_{\M}(\x,\x')^{2+\alpha p}}\,d\mathcal{H}^2(\x')\,d\mathcal{H}^2(\x)\label{eq:J_1}\\
&+\sum_{(i,j)\in \mathcal{J}_{2,n}}\int_{T_i}\int_{T_j}\frac{|\u_n(\x,t)-\u_n(\x',t)|^p}{d_{\M}(\x,\x')^{2+\alpha p}}\,d\mathcal{H}^2(\x')\,d\mathcal{H}^2(\x)\label{eq:J_2}\\
&+\sum_{(i,j)\in \mathcal{J}_{3,n}}\int_{T_i}\int_{T_j}\frac{|\u_n(\x,t)-\u_n(\x',t)|^p}{d_{\M}(\x,\x')^{2+\alpha p}}\,d\mathcal{H}^2(\x')\,d\mathcal{H}^2(\x).\label{eq:J_3}
\end{align}
We estimate separately the last three  terms. }

\textcolor{blue}{We begin with \eqref{eq:J_1}.
Due to the definition of $\u_n$ we know that
\begin{equation}
\label{eq:linear1}
 |\u_n(\x,t)-\u_n(\x',t)|\le c\sum_{\ell\not =i}\frac{ |\u_{n,i}(t)-\u_{n,\ell}(t)|}{|\x_{n,i}-\x_{n,\ell}|}d_{\M}(\x,\x'),\qquad \x,\,\x'\in T_i.
\end{equation}
Thanks to \eqref{eq:reg}, and \eqref{eq:locqu} we have 
\begin{align*}
\sum_{(i,j)\in \mathcal{J}_{1,n}}&\int_{T_i}\int_{T_j}\frac{|\u_n(\x,t)-\u_n(\x',t)|^p}{d_{\M}(\x,\x')^{2+\alpha p}}\,d\mathcal{H}^2(\x')\,d\mathcal{H}^2(\x)\\
\le &\sum_{i=0}^{k_n}\int_{T_i}\int_{T_i}\frac{|\u_n(\x,t)-\u_n(\x',t)|^p}{d_{\M}(\x,\x')^{2+\alpha p}}\,d\mathcal{H}^2(\x')\,d\mathcal{H}^2(\x)\\
\le &c\sum_{i=0,\,\ell\not= i}^{k_n}\frac{ |\u_{n,i}(t)-\u_{n,\ell}(t)|^p}{|\x_{n,i}-\x_{n,\ell}|^p}   \int_{T_i}\int_{T_i}\frac{d\mathcal{H}^2(\x')\,d\mathcal{H}^2(\x)}{d_{\M}(\x,\x')^{2-(1-\alpha) p}}\\
\le &c\sum_{i=0,\,\ell\not= i}^{k_n} \frac{ |\u_{n,i}(t)-\u_{n,\ell}(t)|^p}{|\x_{n,i}-\x_{n,\ell}|^{2+\alpha p}} |\x_{n,i}-\x_{n,\ell}|^{2-(1-\alpha) p}  \int_{B_{h_{T_i}}(\x_{n,i})}\int_{B_{h_{T_i}}(\x_{n,i})}
\frac{d\mathcal{H}^2(\x')\,d\mathcal{H}^2(\x)}{d_{\M}(\x,\x')^{2-(1-\alpha) p}}\\
= &c\sum_{i=0,\,\ell\not= i}^{k_n}\frac{ |\u_{n,i}(t)-\u_{n,\ell}(t)|^p}{|\x_{n,i}-\x_{n,\ell}|^{2+\alpha p}} |\x_{n,i}-\x_{n,\ell}|^{2-(1-\alpha) p} h_{T_i}^{2+(1-\alpha) p} \int_{B_{1}(\x_{n,i})}\int_{B_{1}(\x_{n,i})}
\frac{d\mathcal{H}^2(\x')\,d\mathcal{H}^2(\x)}{d_{\M}(\x,\x')^{2-(1-\alpha) p}}\\
\le  &c\sum_{i=0,\,\ell\not= i}^{k_n}\frac{ |\u_{n,i}(t)-\u_{n,\ell}(t)|^p}{|\x_{n,i}-\x_{n,\ell}|^{2+\alpha p}} h_{T_i}^4\int_{B_{1}(\x_{n,i})}\int_{B_{1}(\x_{n,i})}
\frac{d\mathcal{H}^2(\x')\,d\mathcal{H}^2(\x)}{d_{\M}(\x,\x')^{2-(1-\alpha) p}}\\
\le  &c\sum_{i=0,\,\ell\not= i}^{k_n}\frac{ |\u_{n,i}(t)-\u_{n,\ell}(t)|^p}{|\x_{n,i}-\x_{n,\ell}|^{2+\alpha p}} \mathcal{H}^2(T_i) \mathcal{H}^2(T_\ell),
\end{align*}
where we used the facts
\begin{equation*}
2-(1-\alpha) p<2,\qquad |\x_{n,i}-\x_{n,\ell}|^{2-(1-\alpha) p} \le c\begin{cases}
 \max\limits_j h_{T_j}^{2-(1-\alpha) p}, &\text{if $2-(1-\alpha) p\ge0$},\\
 \min\limits_j \rho_{T_j}^{2-(1-\alpha) p}, &\text{if $2-(1-\alpha) p<0$}.
 \end{cases}
\end{equation*}
Therefore, the term in \eqref{eq:J_1} is bounded due to Lemma \ref{lm:enest}.}

\textcolor{blue}{We continue with the term \eqref{eq:J_2}.
Using again the definition of $\u_n$ we know that
\begin{align}
\label{eq:linear3}
 |\u_n(\x,t)-\u_n(\x',t)|\le& c\left(\sum_{\ell\not =i}\frac{ |\u_{n,i}(t)-\u_{n,\ell}(t)|}{|\x_{n,i}-\x_{n,\ell}|}+\sum_{\ell\not =j}\frac{ |\u_{n,j}(t)-\u_{n,\ell}(t)|}{|\x_{n,j}-\x_{n,\ell}|}\right)d_{\M}(\x,\x'),
 \quad 
(\x,\,\x')\in T_i\times T_j,\,(i,j)\in\mathcal{J}_{n,2}.\notag
\end{align}
Thanks to \eqref{eq:reg}, and \eqref{eq:locqu} we have \begin{align*}
\sum_{(i,j)\in \mathcal{J}_{2,n}}&\int_{T_i}\int_{T_j}\frac{|\u_n(\x,t)-\u_n(\x',t)|^p}{d_{\M}(\x,\x')^{2+\alpha p}}\,d\mathcal{H}^2(\x')\,d\mathcal{H}^2(\x)\\
\le &c\!\!\sum_{(i,j)\in \mathcal{J}_{2,n}}\int_{T_i}\int_{T_j}\left(\sum_{\ell\not =i}\frac{ |\u_{n,i}(t)-\u_{n,\ell}(t)|^p}{|\x_{n,i}-\x_{n,\ell}|^p}
+\sum_{\ell\not =j}\frac{ |\u_{n,j}(t)-\u_{n,\ell}(t)|^p}{|\x_{n,j}-\x_{n,\ell}|^p}\right)\frac{d\mathcal{H}^2(\x')\,d\mathcal{H}^2(\x)}{d_{\M}(\x,\x')^{2-(1-\alpha) p}}\\
\le &c\!\!\sum_{(i,j)\in \mathcal{J}_{2,n}}\left(\sum_{\ell\not =i}\frac{ |\u_{n,i}(t)-\u_{n,\ell}(t)|^p}{|\x_{n,i}-\x_{n,\ell}|^{2+\alpha p}}
+\sum_{\ell\not =j}\frac{ |\u_{n,j}(t)-\u_{n,\ell}(t)|^p}{|\x_{n,j}-\x_{n,\ell}|^{2+\alpha p}}\right)\max_q h_{T_q}^{2-(1-\alpha) p} \int_{T_i}\int_{T_j}\frac{d\mathcal{H}^2(\x')\,d\mathcal{H}^2(\x)}{d_{\M}(\x,\x')^{2-(1-\alpha) p}}\\
\le &c\!\!\sum_{(i,j)\in \mathcal{J}_{2,n}}\left(\sum_{\ell\not =i}\frac{ |\u_{n,i}(t)-\u_{n,\ell}(t)|^p}{|\x_{n,i}-\x_{n,\ell}|^{2+\alpha p}}
+\sum_{\ell\not =j}\frac{ |\u_{n,j}(t)-\u_{n,\ell}(t)|^p}{|\x_{n,j}-\x_{n,\ell}|^{2+\alpha p}}\right)\max_q h_{T_q}^{2-(1-\alpha) p}
 \int_{B_{h_{T_i}}(\x_{n,i})}\int_{B_{h_{T_j}}(\x_{n,j})}\frac{d\mathcal{H}^2(\x')\,d\mathcal{H}^2(\x)}{d_{\M}(\x,\x')^{2-(1-\alpha) p}}\\
 \le &c\!\!\sum_{(i,j)\in \mathcal{J}_{2,n}}\left(\sum_{\ell\not =i}\frac{ |\u_{n,i}(t)-\u_{n,\ell}(t)|^p}{|\x_{n,i}-\x_{n,\ell}|^{2+\alpha p}}
+\sum_{\ell\not =j}\frac{ |\u_{n,j}(t)-\u_{n,\ell}(t)|^p}{|\x_{n,j}-\x_{n,\ell}|^{2+\alpha p}}\right)\max_q h_{T_q}^{4}
 \int_{B_{1}(\x_{n,i})}\int_{B_{1}(\x_{n,j})}\frac{d\mathcal{H}^2(\x')\,d\mathcal{H}^2(\x)}{d_{\M}(\x,\x')^{2-(1-\alpha) p}}\\
\le  &c\!\!\sum_{i=0,\,\ell\not= i}^{k_n}\frac{ |\u_{n,i}(t)-\u_{n,\ell}(t)|^p}{|\x_{n,i}-\x_{n,\ell}|^{2+\alpha p}} \mathcal{H}^2(T_i) \mathcal{H}^2(T_\ell),
\end{align*}
where we used again the inequality
\begin{equation*}
2-(1-\alpha) p<2.
\end{equation*}
Therefore, also the term in \eqref{eq:J_2} is bounded due to Lemma \ref{lm:enest}.}

\textcolor{blue}{We conclude with the term \eqref{eq:J_3}. Using again the definition of $\u_n$ we know that
\begin{equation}
\label{eq:linear3}
\begin{cases}
 |\u_n(\x,t)-\u_n(\x',t)|\le |\u_{n,i}(t)-\u_{n,j}(t)|&{}\\
 \qquad\qquad\qquad\displaystyle+\sum_{\ell\not =i}\frac{ |\u_{n,i}(t)-\u_{n,\ell}(t)|}{|\x_{n,i}-\x_{n,\ell}|}h_{T_i}&{}\\
 \qquad\qquad\qquad\displaystyle +\sum_{\ell\not =j}\frac{ |\u_{n,j}(t)-\u_{n,\ell}(t)|}{|\x_{n,j}-\x_{n,\ell}|}h_{T_j},&{}\\
 d_{\M}(\x,\x')\ge\min\limits_\ell \rho_{T_\ell},& {}\end{cases}
 \quad 
(\x,\,\x')\in T_i\times T_j,\,(i,j)\in\mathcal{J}_{n,3}.
\end{equation}
Thanks to \eqref{eq:reg}, and \eqref{eq:locqu} we have 
\begin{align*}
\sum_{(i,j)\in \mathcal{J}_{3,n}}&\int_{T_i}\int_{T_j}\frac{|\u_n(\x,t)-\u_n(\x',t)|^p}{d_{\M}(\x,\x')^{2+\alpha p}}\,d\mathcal{H}^2(\x')\,d\mathcal{H}^2(\x)\\
\le &\frac{c}{\min\limits_\ell \rho_{T_\ell}^{2+\alpha p}}
\sum_{(i,j)\in \mathcal{J}_{3,n}}
\left( |\u_{n,i}(t)-\u_{n,j}(t)|^p+\sum_{\ell\not =i}\frac{ |\u_{n,i}(t)-\u_{n,\ell}(t)|^p}{|\x_{n,i}-\x_{n,\ell}|^p}h_{T_i}^p+\right.\\
&\qquad\qquad\qquad\qquad\left. +\sum_{\ell\not =j}\frac{ |\u_{n,j}(t)-\u_{n,\ell}(t)|^p}{|\x_{n,j}-\x_{n,\ell}|^p}h_{T_j}^p\right)\mathcal{H}^2(T_i) \mathcal{H}^2(T_j)\\
\le &\frac{c}{\min\limits_\ell \rho_{T_\ell}^{2+\alpha p}}
\sum_{(i,j)\in \mathcal{J}_{3,n}}
\left( \frac{|\u_{n,i}(t)-\u_{n,j}(t)|^p}{|\x_{n,i}-\x_{n,j}|^{2+\alpha p}}|\x_{n,i}-\x_{n,j}|^{2+\alpha p}\right.\\
&\qquad\qquad\qquad\qquad\left. +\sum_{\ell\not =i}\frac{ |\u_{n,i}(t)-\u_{n,\ell}(t)|^p}{|\x_{n,i}-\x_{n,\ell}|^{2+\alpha p}}|\x_{n,i}-\x_{n,\ell}|^{2-(1-\alpha) p} h_{T_i}^p+\right.\\
&\qquad\qquad\qquad\qquad\left. +\sum_{\ell\not =j}\frac{ |\u_{n,j}(t)-\u_{n,\ell}(t)|^p}{|\x_{n,j}-\x_{n,\ell}|^{2+\alpha p}}|\x_{n,j}-\x_{n,\ell}|^{2-(1-\alpha) p}h_{T_j}^p\right)\mathcal{H}^2(T_i) \mathcal{H}^2(T_j)\\
\le   &c\sum_{i=0,\,\ell\not= i}^{k_n} \frac{ |\u_{n,i}(t)-\u_{n,\ell}(t)|^p}{|\x_{n,i}-\x_{n,\ell}|^{2+\alpha p}} \mathcal{H}^2(T_i) \mathcal{H}^2(T_\ell),\end{align*}}
\end{proof}

\textcolor{blue}{Arguing as in \eqref{eq:l2affine} we can prove also the following lemma.}

\textcolor{blue}{
\begin{lemma}
\label{lm:taff}
Let  $\{\u_n\}_n$ be the sequence defined in \eqref{eq:franco}.
Then $\{\pt \u_n\}_n$ is a bounded sequence in $L^\infty(0,T;L^2(\M;\R^3))$, for very $T>0$.
\end{lemma}}

%\textcolor{blue}{
%\begin{proof}
%Due to the definition of $\u_n$ we know that
%\begin{equation}
%\label{eq:lineart}
% |\pt\u_n(\x,t)|\le |\dot\u_{n,i}(t)|+\max_{j\not =i}\frac{ |\dot\u_{n,i}(t)-\dot\u_{n,j}(t)|}{|\x_{n,i}-\x_{n,j}|}|\x_{n,i}-\x|,\qquad \x\in T_i.
%\end{equation}
%Thanks to\eqref{eq:reg}, and \eqref{eq:locqu}  we have 
%\begin{align*}
%\norm{\pt \u_n(t,\cdot)}_{L^2(\M;\R^3)}^2=&\int_\M |\pt \u_n(\x,t)|^2d\mathcal{H}^2(\x)
%=\sum_{i=0}^{k_n}\int_{T_i} |\pt \u_n(\x,t)|^2d\mathcal{H}^2(\x)\\
%\le&c\sum_{i=0}^{k_n}\left(\int_{T_i} |\dot\u_{n,i}(t)|^2d\mathcal{H}^2(\x)+\max_{j\not =i}\int_{T_i}\frac{ |\dot\u_{n,i}(t)-\dot\u_{n,j}(t)|^2}{|\x_{n,i}-\x_{n,j}|^2}|\x_{n,i}-\x|^2d\mathcal{H}^2(\x)\right)\\
%\le&c\sum_{i=0}^{k_n}\mathcal{H}^2(T_i)\left( |\dot\u_{n,i}(t)|^2+\max_{j\not =i}\frac{ |\dot\u_{n,i}(t)-\dot\u_{n,j}(t)|^2}{\min\limits_\ell\rho_{T_\ell}^2}h_{T_i}^2\right)\\
%\le&c\sum_{i=0}^{k_n}\mathcal{H}^2(T_i) |\dot\u_{n,i}(t)|^2.
%\end{align*}
%Therefore the claim follows from Lemma \ref{lm:enest}.
%\end{proof}
%}

\begin{proof}[Proof of Proposition \ref{lm:exists}]
%\textcolor{red}{Thanks to the definition of $\u_n$ and Lemmas \ref{lm:enest} and \ref{lm:l2est} we have that
%\begin{align*}
%&\text{$\{\pt \u_n\}_n$ and $\{\u_n\}_n$ are bounded sequences in $L^\infty(0,T;L^2(\M;\R^3))$, for very $T>0$}.
%\end{align*}
%Therefore, there exists a subsequence $\{\u_{n_h}\}_h$ and a function $\u \in L^\infty(0,T;L^2(\M;\R^3))$ such that
%\begin{equation}
%\label{eq:cc1}
%\begin{split}
%&\text{$\{\pt \u\}\in L^\infty(0,T;L^2(\M;\R^3))$, for very $T>0$},\\
%&\u_{n_h}\weak \u \>\>\text{weakly in $L^q(0,T;L^2(\M;\R^3))$, for very $T>0$ and $1\le q<\infty$}.
%\end{split}
%\end{equation}
%Thanks to \cite[Theorem 4.4]{MR3620141} and the assumptions on the mesh we are able to improve \eqref{eq:cc1} as follows
%\begin{align*}
%&\u \in \mathcal{X},\\
%&\text{$\u_{n_h}\to \u$ a.e. and in $L^q(0,T;L^2(\M;\R^3)),\,T>0,\,1\le q<\infty,$ as $h\to\infty$,}
%\end{align*}
%that is \eqref{eq:exist1}.}

Thanks to the definition of $\u_n$ and Lemmas \textcolor{blue}{\ref{lm:Sobenest} and \ref{lm:taff}} we have that
\begin{align*}
&\text{$\{\pt \u_n\}_n$ is a bounded sequence in $L^\infty(0,T;L^2(\M;\R^3))$, for very $T>0$},\\
&\text{$\{\u_n\}_n$ is a bounded sequence in $L^\infty(0,T;\mathcal{W})$, for very $T>0$}.
\end{align*}
Therefore, by Lemma \ref{lm:compembed}, we have that there exist a subsequence $\{\u_{n_h}\}_h$ and a function $\u \in \mathcal{X}$ such that \eqref{eq:exist1} holds true.

We have to prove \eqref{eq:exist2}. For the sake of notation simplicity, we  label the sequence with $n$ and not $n_h$.
Let $\vfi\in C^\infty({\M\times\R};\R^3)$ be a test function  with compact support. Multiplying \eqref{eq:CPnum} by $\vfi(\x_{n,i},t)$,
integrating over $t$ and summing over $i$ we get
{\begin{align*}
&\underbrace{\int_0^\infty \sum_{j=0}^{k_n}\sum_{\x_{i,n}\in T_j\cap\mathcal{N}_n}\frac{\mathcal{H}^2(T_j)}{3}\ddot \u_{n,i}(t)\vfi(\x_{n,i},t)dt}_{\mathcal{D}_n}\\
&\qquad=\underbrace{\int_0^\infty \sum_{j=0}^{k_n}\sum_{\x_{i,n}\in T_j\cap\mathcal{N}_n}\frac{\mathcal{H}^2(T_j)}{3}(K_{n}\u_n(\cdot,t))(\x_{n,i})\vfi(\x_{n,i},t)dt}_{\mathcal{E}_n}\\
&\qquad\qquad+\underbrace{\int_0^\infty \sum_{j=0}^{k_n}\sum_{\x_{i,n}\in T_j\cap\mathcal{N}_n}\frac{\mathcal{H}^2(T_j)}{3}\b_n(\x_{n,i},t)\vfi(\x_{n,i},t)dt.}_{\mathcal{F}_n}
\end{align*}}

Thanks to \eqref{eq:assinitial_n} and \eqref{eq:exist1} we have
{\begin{align*}
\mathcal{D}_n=&\int_0^\infty \sum_{j=0}^{k_n}\sum_{\x_{i,n}\in T_j\cap\mathcal{N}_n}\frac{\mathcal{H}^2(T_j)}{3} \u_{n,i}(t)\ptt\vfi(\x_{n,i},t)dt\\
&-2\int_0^\infty \sum_{j=0}^{k_n}\sum_{\x_{i,n}\in T_j\cap\mathcal{N}_n}\frac{\mathcal{H}^2(T_j)}{3} \vv_{0,n}(\x_{n,i})\vfi(\x_{n,i},t)dt+
2\int_0^\infty \sum_{j=0}^{k_n}\sum_{\x_{i,n}\in T_j\cap\mathcal{N}_n}\frac{\mathcal{H}^2(T_j)}{3}  \vv_{0,n}(\x_{n,i})\vfi(\x_{n,i},t)dt\to\\
\to&\int_0^\infty\!\!\!\!\int_{\M} \u(\x,t)\ptt\vfi(\x,t)\,dt\,d\mathcal{H}^2(\x)-\int_{\M} \vv_0(\x)\vfi(\x,0)d\mathcal{H}^2(\x)+\int_{\M} \u_0(\x)\pt\vfi(\x,0)d\mathcal{H}^2(\x),\\
\mathcal{F}_n\to&\int_0^\infty\!\!\!\!\int_{\M} \b(\x,t)\vfi(\x,t) \,dt\,d\mathcal{H}^2(\x).
\end{align*}}
We have to prove that
\begin{equation}
\label{eq:sol-weak-alpha1}
\mathcal{E}_n\to\int_0^\infty\!\!\!\!\int_{\M}(K \u(\cdot,t))(\x)\vfi (\x,t) \,dt\,d\mathcal{H}^2(\x).
\end{equation}
We split $\mathcal{E}_n$ as follows
{\begin{align*}
\mathcal{E}_n=&\underbrace{\int_0^\infty \sum_{j=0}^{k_n}\sum_{\x_{i,n}\in T_j\cap\mathcal{N}_n}\frac{\mathcal{H}^2(T_j)}{3}(K_{n}\u_n(\cdot,t)-K\u_n(\cdot,t))(\x_{n,i})\vfi(\x_{n,i},t)dt}_{\mathcal{E}_{n,1}}\\
&\underbrace{+\int_0^\infty \sum_{j=0}^{k_n}\sum_{\x_{i,n}\in T_j\cap\mathcal{N}_n}\frac{\mathcal{H}^2(T_j)}{3}(K\u_n(\cdot,t))(\x_{n,i})\vfi(\x_{n,i},t)dt
-\int_0^\infty\!\!\!\!\int_{\M}(K \u_n(\cdot,t))(\x)\vfi (\x,t) \,dt\,d\mathcal{H}^2(\x)}_{\mathcal{E}_{n,2}}\\
&\underbrace{+\int_0^\infty\!\!\!\!\int_{\M}(K \u_n(\cdot,t))(\x)\vfi (\x,t) \,dt\,d\mathcal{H}^2(\x).}_{\mathcal{E}_{n,3}}
\end{align*}}
We observe that, arguing as in Lemma \ref{lm:enest}
\begin{align*}
\int_0^\infty\!\!\!\!\int_{\M}&(K \u_n(\cdot,t))(\x)\vfi (\x,t) \,dt\,d\mathcal{H}^2(\x)\\
=&\kappa\int_0^\infty\!\!\!\!\int_{\M}\int_{\M\cap B_\delta(\x)}
\frac{|\u(\x',t)-\u(\x,t)|^{p-2}}{d_{\M}(\x',\x)^{2+\alpha p}}(\u(\x',t)-\u(\x,t))\vfi (\x,t) \,dt\,d\mathcal{H}^2(\x)d\mathcal{H}^2(\x')\\
=&\frac{\kappa}{2}\int_0^\infty\!\!\!\!\int_{\M}\int_{\M\cap B_\delta(\x)}
\frac{|\u(\x',t)-\u(\x,t)|^{p-2}}{d_{\M}(\x',\x)^{2+\alpha p}}(\u(\x',t)-\u(\x,t))(\vfi (\x,t)-\vfi (\x',t)) \,dt\,d\mathcal{H}^2(\x)d\mathcal{H}^2(\x')\\
\le&\frac{\kappa}{2}\underbrace{\left(\int_0^\infty\!\!\!\!\int_{\M}\int_{\M\cap B_\delta(\x)}
\frac{|\u(\x',t)-\u(\x,t)|^{p}}{d_{\M}(\x',\x)^{2+\alpha p}}\,dt\,d\mathcal{H}^2(\x)d\mathcal{H}^2(\x')\right)^{\frac{p-1}{p}}}_{\text{bounded due to Lemma \ref{lm:enest}}}
\times\\
&\qquad \times\underbrace{\left(\int_0^\infty\!\!\!\!\int_{\M}\int_{\M\cap B_\delta(\x)}
\frac{|\vfi(\x',t)-\vfi(\x,t)|^{p}}{d_{\M}(\x',\x)^{p}}
\frac{1}{d_{\M}(\x',\x)^{2+\alpha p-p}}\,dt\,d\mathcal{H}^2(\x)d\mathcal{H}^2(\x')\right)^{\frac{1}{p}}}_{\text{bounded because $\vfi\in C^\infty$
and $2+\alpha p-p<2$}},
\end{align*}
therefore standard arguments guarantee
\begin{equation*}
\mathcal{E}_{n,1}\to0,\qquad \mathcal{E}_{n,2}\to0.
\end{equation*}
Moreover, thanks to Lemma \ref{lm:compembed} and \eqref{eq:exist1}
\begin{equation*}
\mathcal{E}_{n,3}\to\int_0^\infty\!\!\!\!\int_{\M}(K \u(\cdot,t))(\x)\vfi (\x,t) \,dt\,d\mathcal{H}^2(\x).
\end{equation*}

Finally, \eqref{eq:sol-diss-alpha} follows from Lemma \ref{lm:enest}, \eqref{eq:exist1}, and \eqref{eq:assinitial_n}.
\end{proof}

\begin{proof}[Proof of Theorem \ref{th:main}]
{\bf (A)} and {\bf (B)} follow from Lemma \ref{lm:exists},
{\bf (D)} can be proved arguing as in \cite[Section 4]{CDMV}, and {\bf (C)} follows from {\bf (D)}.
\end{proof}

\section{\bf Numerical scheme and computational strategy}
\label{numscheme}

{In this section the numerical procedure for the approximation of the solution of the Cauchy problem \eqref{eq:CP} is detailed. Firstly, its semi-discrete approximation is given through the projection of the 2D manifold $\mathcal{M}$ onto the triangular mesh $M$. The problem of computing the geodesic distance on a mesh is discussed and the Dijkstra algorithm is proposed as a routing tool on the latter. Such algorithm is used for defining the discrete function $d_M(\cdot , \cdot)$ returning the distance between two points on $M$ as total length of the shortest path connecting them. Lastly, the time integration of \eqref{eq:CP} is performed through a P-(EC)$^k$ formulation of the $\beta$--Newmark scheme. Note that, since the kernel ${\bf K}(\u(\x))$ is nonlinear, the magnitude of the structural internal response to any external stress or initial condition is hard to predict. Moreover, the nonlocal nature of ${\bf K}(\u(\x))$ may imply interactions and competitions between different spatial scales that may cause the repentine transfer of large gradient of energy between them. Due to these reasons, the authors choose to adopt an implicit P-(EC)$^k$ scheme since the stability of an explicit scheme can be affected up to unfeasible time-steps. In this context, the $\beta$--Newmark scheme is extremely suitable being stable regardless of the size of the time step (i.e. $\frac{1}{2}\leq\gamma\leq 2\beta$).}

\subsection{Spatial discretization}

The evolution  problem  is computed on a triangular mesh $M$ composed by $nv$ nodes connected by $ne$ edges and conforming the two dimensional manifold $\M$. The semi-discrete approximation of the Cauchy problem \eqref{eq:CP} for $i \in {1, 2, 3, ..., nv}: \x_i \in M$ reads:
\begin{equation}
\begin{cases}
\rho_i \ddot{\u}_i(t) = {\bf K}_i(\u_i)(t) + {\bf b}_i(t),\\
\displaystyle{\bf K}_i(\u_i)(t) = \sum_{j\in M\cap B^i_\delta} k_{i,j} \left( \frac{|\u_i(t) - \u_j(t)|^{p-2}}{d_{M}(\x_i,\x_j)^{2 + \alpha p}} (\u_i(t) - \u_j(t)) \right)\Delta A_j,\\
\u_i(0)=\u_{i,0}, \\
\dot{\u}_i(0) = {\bf v}_{i,0}, \\
\end{cases}
\label{semiD}
\end{equation}
with $\rho_i$ the density assigned to the node i; $B^j_\delta$ the ball by radius $\delta$ centered in $x_i$; $k_{i,j}$ the specific pairwise elastic modulus; $d_{M}(\cdot,\cdot)$ the approximation of the geodesic distance $d_{\M}(\cdot,\cdot)$ computed as a polygonal in $M$; $\Delta A_j$ the portion of the triangular cell area assigned to j; and $\u_{i,0}$ and ${\bf v}_{i,0}$ the initial displacement and velocity for i.

\subsection{Dijkstra algorithm for computing geodesic distances on $M$}

The polygonal approximating the geodesic distance function $d_{\M}(\cdot,\cdot)$ is obtained by computing the discrete shortest path on $M$ connecting two given points (see {\bf Figure.~\ref{distance}a}). A path on $M$ is defined as the collection of edges traveled from the starting to the ending point of the path. So that, the shortest path is meant in terms of the sum of the edge lengths for connecting the two selected points. Indeed, edge lengths are computed as Euclidean distances and the total length of the polygonal connecting two points is meant as the approximation of $d_{\M}(\cdot,\cdot)$ on $M$, $d_{M}(\cdot,\cdot)$. To find the shortest path on $M$, the Dijkstra algorithm is adopted. 
\begin{figure}
\centering
\includegraphics[scale=0.3]{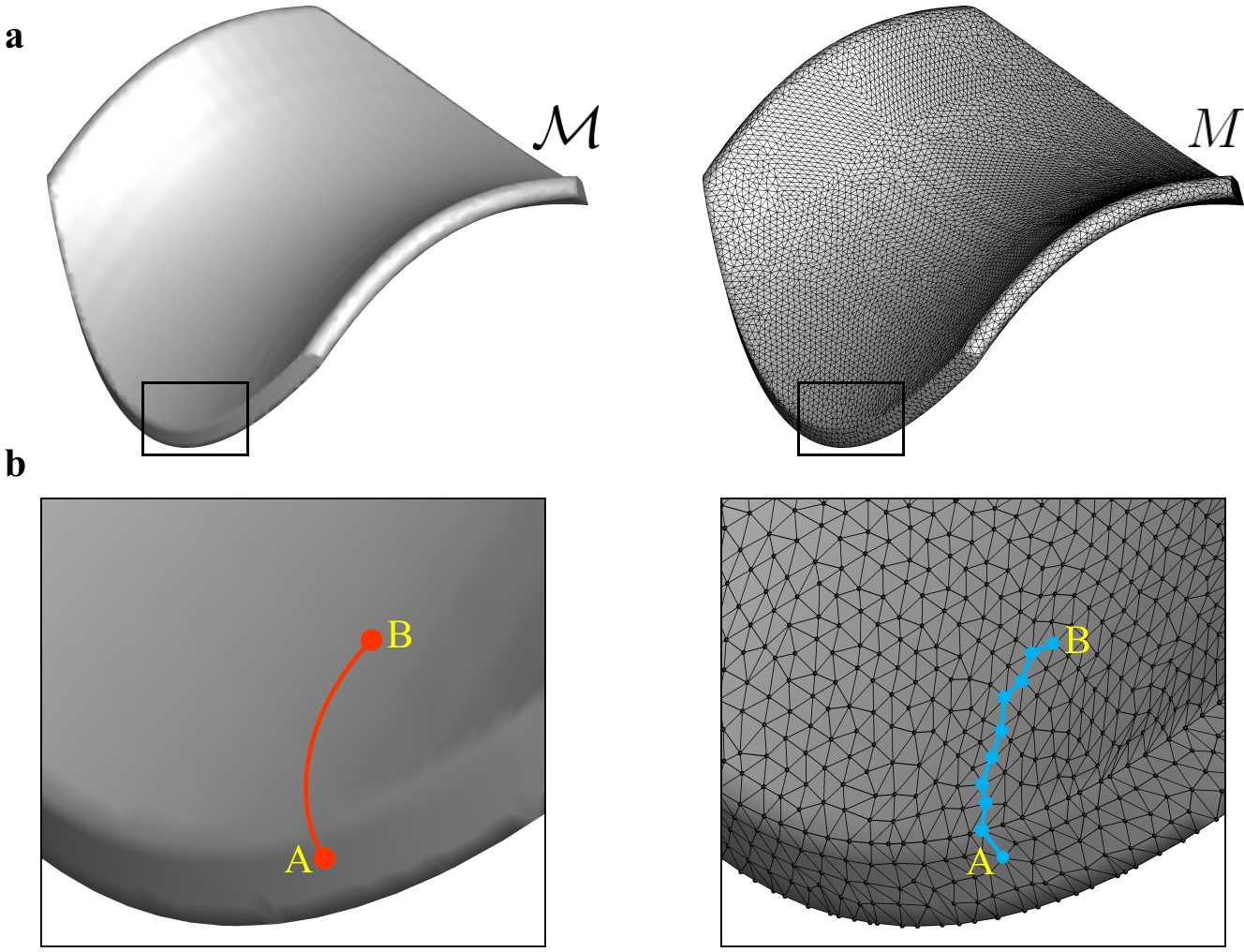}
\caption{{\bf Computational mesh and spatial discretization.} {\bf a.} Approximation of the manifold $\M$ into the conforming mesh $M$. {\bf b.} Curve connecting the points A and B, $\overline{(A,B)}_{\M}$ (left plot) and its approximation $\overline{(A,B)}_{M}$ (right plot).}
\label{distance}
\end{figure}
This algorithm was originally designed as a routing tool for computing the shortest path connecting two nodes in a non-oriented graph with non-negative weights on the edges~\cite{dijkstra1959}. Here, $M$ is re-interpreted as a non-oriented graph $G_M$ in which the weights correspond to the lengths of the $ne$ edges connecting the $nv$ vertices. Specifically, the graph $G_M$ is a discrete function defined as $G_M: \{1,2,3,...,nv\}\times \{1,2,3,...,nv\} \to \R_+$ assuming the value of the euclidean distance for all couple of vertices connected by a single edge in $M$. The pseudo-code for computing the shortest path on $G_M$ between the node A and all other reachable nodes in $M$ follows: 
\begin{itemize}
\item[] {\bf function} dijkstra($G_M$,A)
\item[] $distA[A] = 0$, $distA[\cdot]$ is the array storing the distance from A
\item[] {\bf for} each vertex $v \in G_M$
\begin{itemize} 
\item[] $distA[v] \leftarrow \infty $
\item[] $prevA[v,1] \leftarrow $ null, $prevA[v,\cdot]$ is the array storing the nodes connecting A to v
\item[] {\bf if} $v \neq A$, add v to priority queue $Q_A$
\end{itemize}
\item[] {\bf while} $Q_A$ is not empty
\begin{itemize}
\item[] find $u: |x_A-x_u| = min_{j \in Q_A}|x_A-x_j|$, u is the vertex closer to A in $Q_A$
\item[] {\bf for} each unvisited neighbor v of u
\begin{itemize}
\item[] $tempDist \leftarrow distA[u] + edgeLength(u,v)$
\item[] {\bf if} $tempDist < distA[v]$
\begin{itemize} 
\item[] $nStpA[v] = nStpA[v]+1$, $nStpA[v]$ is a counter for the steps needed from A to v
\item[] $distA[v] = tempDist$
\item[] $prevA[v,nStpA[v]] = u$ 
\end{itemize}
\end{itemize}
\end{itemize}
\item[] {\bf return} distA[$1:nv$], prevA[$1:nv$,$nStpA(1:nv)$]
\end{itemize}
This algorithm returns the function $d_M(A,\cdot)$ corresponding to the polygonal approximation of $d_{\M}(A,\cdot)$ and is based on the hypothesis that any subpath $\overline{CD}$ of the unknown shortest path $\overline{AB}$ is also the shortest path between vertices C and D. This polygonal $\overline{(A,B)}_{M}$ is depicted in {\bf Figure.\ref{distance}b} and compared with the geodesic curve $\overline{(A,B)}_{\M}$. Dijkstra algorithm time complexity is O$(ne+nv\log(nv))$ while its space complexity is O$(nv)$~\cite{russell2002,bauer2010}. However, it is nowadays still one the most performing routing procedure. 

\subsection{Time discretization}
The full discretization of the semi-discrete problem \eqref{semiD} is obtained by adopting an implicit P(EC)$^k$ $\beta$-Newmark II order scheme. The solution at time level n+1 is obtained as follows for the i-th point of the mesh:

\begin{itemize}

\item P-Step (Prediction):
\begin{equation}
{\begin{cases}
\dot{\u}^P_{(i,n+1)} = \dot{\u}_{(i,n)} + (1-\gamma)\Delta t \ddot{\u}_{(i,n)},\\
\u^P_{(i,n+1)} = \u_{(i,n)} + \Delta t \dot{\u}_{(i,n)} + (1-2\beta)\frac{\Delta t^2}{2} \ddot{\u}_{(i,n)},\\
\dot{\u}^k_{(i,n+1)} = \dot{\u}^P_{(i,n+1)},\\
\u^k_{(i,n+1)} = \u^P_{(i,n+1)};
\end{cases}}
\end{equation}

\item (EC)$^k$-Step (implicit Evaluation-Correction): 

\begin{itemize}

\item E-Step: 
\begin{equation}
{\ddot{\u}^k_{(i,n+1)} = \frac{{\bf K}(\x^k_{(i,n+1)},\u^k_{(i,n+1)}) + {\bf b}(\x^k_{(i,n+1)},\u^k_{(i,n+1)})}{\rho_i}}\, ;
\end{equation}

\item C-Step:
\begin{equation}
{\begin{cases}
\dot{\u}^{k+1}_{(i,n+1)} = \dot{\u}^P_{(i,n+1)} + \gamma \Delta t \ddot{\u}^k_{(i,n+1)},\\
\u^{k+1}_{(i,n+1)} = \u^P_{(i,n+1)} + \beta \Delta t^2 \ddot{\u}^k_{(i,n+1)};
\end{cases}}
\end{equation}

\item Convergence Check:
\begin{equation}
{\begin{cases}
|| \dot{\u}^{k+1}_{(i,n+1)} - \dot{\u}^{k}_{(i,n+1)} ||_{L_2} \leq \epsilon, \\
\dot{\u}^k_{(i,n+1)} = \dot{\u}^{k+1}_{(i,n+1)},\\
\u^k_{(i,n+1)} = \u^{k+1}_{(i,n+1)}.
\end{cases}}
\label{convCheck}
\end{equation}
\end{itemize} 
\end{itemize}
The tolerance $\epsilon$ considered in this paper is equal to 10$^{-7}$, and the method converges in 2--8 iterations. {So that, if the error is below the tolerance, we set: \begin{itemize}
\centering
\item $\u_{(i,n+1)} = \u^{k+1}_{(i,n+1)}$;
\item $\dot{\u}_{(i,n+1)} = \dot{\u}^{k+1}_{(i,n+1)}$;
\item $\ddot{\u}_{(i,n+1)} = \ddot{\u}^{k+1}_{(i,n+1)}$.
\end{itemize}}

\section{\bf Numerical experiments}
\label{exp}

In this section, two numerical experiments are performed to detect the role played in the evolution problem by the two main ingredients affecting the present mathematical model: nonlocality (ruled by the parameter $\alpha$) and nonlinearity (ruled by the parameter $p$). The effects exhibited by the solutions of the Cauchy problem \eqref{eq:CP} are considered and critically analyzed for a closed two-dimensional manifold. The considered system is a spherical surface with unitary radius discretized \textit{via} a conforming mesh composed by $nt=642$ triangles, $nv=1280$ vertices and $ne=1920$ edges. The time step $\Delta t$ is fixed for all computations to $10^{-3}$, as well as for simplicity, the density assigned to each vertex is $\rho_i=1; i\in\{1, 2, 3, ..., nv \}$ and it is assumed constant in time. In the same fashion, the pairwise elastic modulus k and the peridynamic horizon $\delta$ are assumed independent from specific edges and vertices, namely $k_{i,j}=1; i, j \in\{1, 2, 3, ..., nv \}$ and $\delta = 0.5$. With these choices for the constitutive parameters, abilities and limitations of the proposed model are investigated through two different numerical experiments. Specifically, the evolution of the system as a function of $p$ and $\alpha$ is firstly studied when considering non-trivial initial conditions for the velocity field without external force, ${\bf b}({\bf x},t)=0$; then, the system initially at the rest is exposed to a uniaxial load along z. Abilities of the proposed model are measured by tracking in time the surface stretching, $\delta S(t) = \frac{S(t)-S(0)}{S(0)}$ - being $S(\cdot)$ the 2D surface measure; and the internal energy, $E(t)$, given by \eqref{eq:energy-alpha}. Indeed, the latter is considered as the sum of two terms, the  kinetic energy and the  potential energy, namely, 
\begin{equation}
\label{EkinEpot}
\begin{cases}\displaystyle E_{kin} (t) = \frac{\rho}{2}\int_{\M} |\pt \u(\x,t)|^2d\x \, ,\\
\displaystyle E_{pot}(t) = \frac{\kappa}{2p} \int_{\M}\int_{\M\cap B_\delta(\x)}
\frac{|\u(\x,t)-\u(\x',t)|^p}{d_{\M}(\x,\x')^{2+\alpha p}}\,d\mathcal{H}^2(\x')\,d\mathcal{H}^2(\x) \, . \end{cases}
\end{equation}
   
\subsection{Spherical surface undergoing  random initial velocity conditions}

The first numerical experiments campaign concerns the evolution of the described spherical surface when considering for each vertex a randomly chosen velocity vector within the ball by radius $|\pt\u(0)|$ (= 0.1, 0.5, 1.0) as a function of p (=2,3, and 5) and $\alpha$ (= 0.0001, 0.5, and 0.999). Note that, the proposed model is defined for $\alpha \in (0,1)$, so that, we use $\alpha =$ 0.0001 and 0.999 as extrema of $\alpha$ range of variation. 
\begin{figure}
\centering
\includegraphics[scale=0.21]{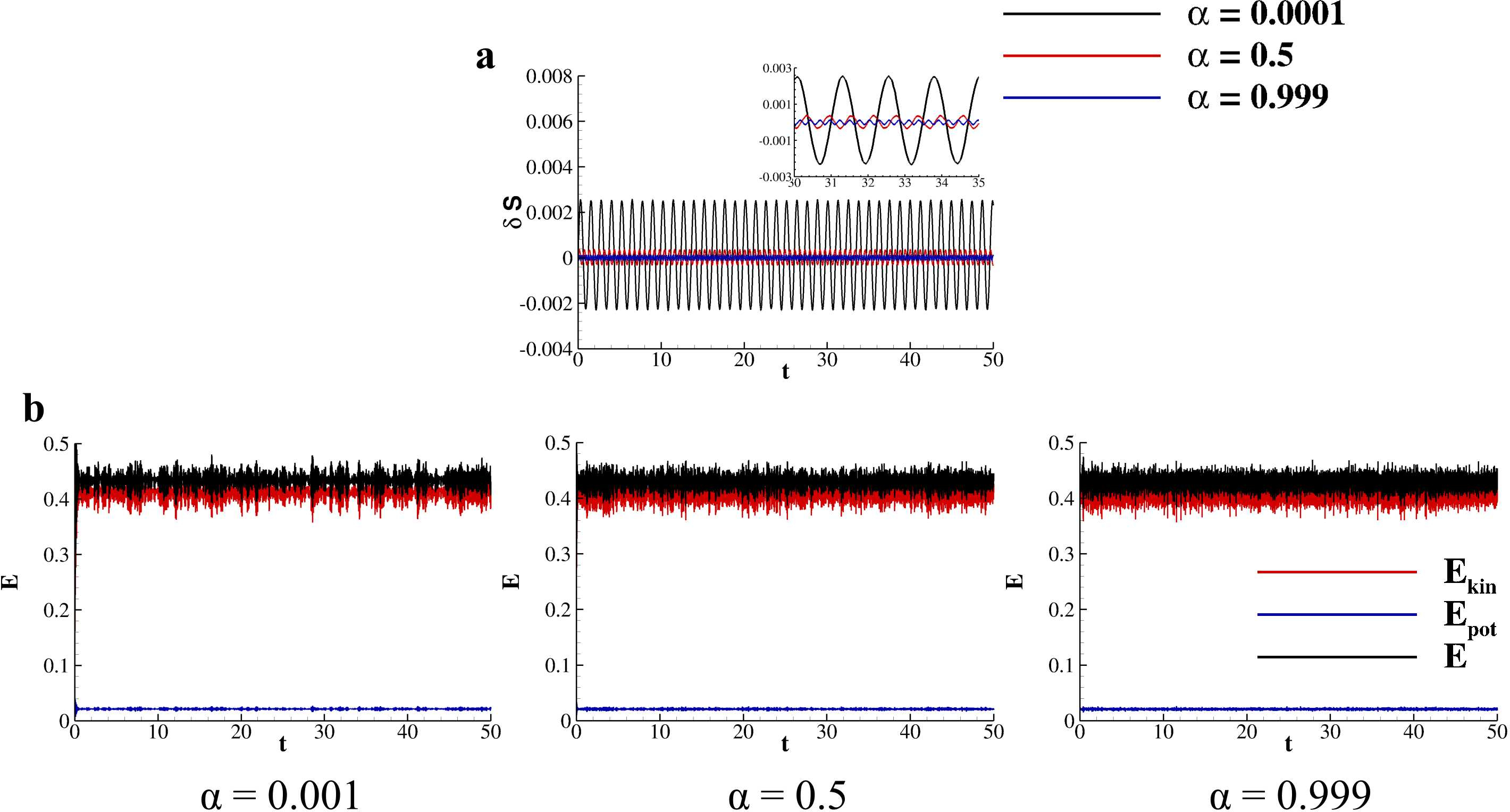}
\caption{{\bf Evolution of a spherical surface due to randomly distributed initial velocity conditions for $p=2$ and $|\pt\u(0)|=0.1$.} Distribution in time of the surface strain ({\bf a}) and energy ({\bf b}) for $\alpha=$ 0.0001, 0.5, and 0.999.}
\label{df01_random}
\end{figure}
\begin{figure}
\centering
\includegraphics[scale=0.21]{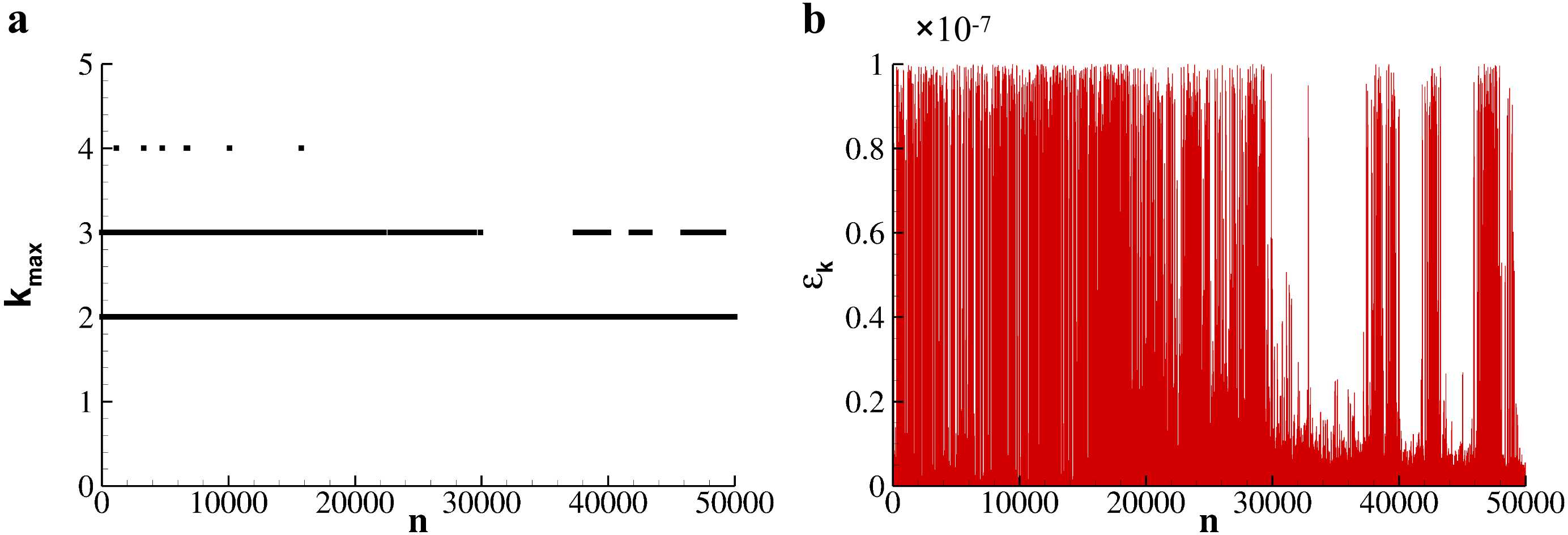}
\caption{{\bf Convergence history of the proposed implicit $P-(EC)^k$ scheme for $p=2$, $\alpha=0.0001$, and $|\pt\u(0)|=0.1$.} {\bf a.} Number of k-iterations performed for each time level n for a target convergence of $10^{-7}$. {\bf b.} Acquired convergence $\epsilon_k$ for each time level n. See \eqref{convCheck} for details.}
\label{convergence}
\end{figure}
For $|\pt \u(0)|=0.1$ and $p=2$ the system behave as a classical elastic material. This is not surprising since for $|\pt \u(0)|=0.1$ the initial amount of kinetic energy is quite small and for $p=2$ the nonlocal wave equation reduces to a linear equation. Due to initial conditions for $\pt \u ({\bf x})$, the spherical manifold deforms while responding to the constitutive equation of the material that restrains all displacement from the resting configuration. This results in a periodic pattern for the relative surface stretching $\delta S$ - reported for $\alpha=$ 0.0001, 0.5, and 0.999 in {\bf Figure.\ref{df01_random}a}. The period of such oscillation is 1.26, 0.61, and 0.265 for $\alpha =$ 0.0001, 0.5, and 0.999, respectively; while, indeed, the offset is the zero-stretching axis. Interestingly, for the linear case $p=2$, $\alpha$ behaves as a parameter for the fine-tuning of the material mechanical stiffness, and no clue of its fundamental role in enhancing nonlocality effects arise. Moreover, the distributions of the two-component of the internal energy (see \eqref{EkinEpot}) are documented in {\bf Figure.\ref{df01_random}b} proving that the proposed $P-(EC)^k$ II order scheme converges to a dissipative solution as defined in Sec.\ref{subsec:spaces}. While, the convergence history for $\alpha = 0.0001$ is documented in {\bf Figure.\ref{convergence}}, thus demonstrating that the method converges to the target tolerance $\epsilon=10^{-7}$ within 2-4 inner iterations.
\begin{figure}
\centering
\includegraphics[scale=0.225]{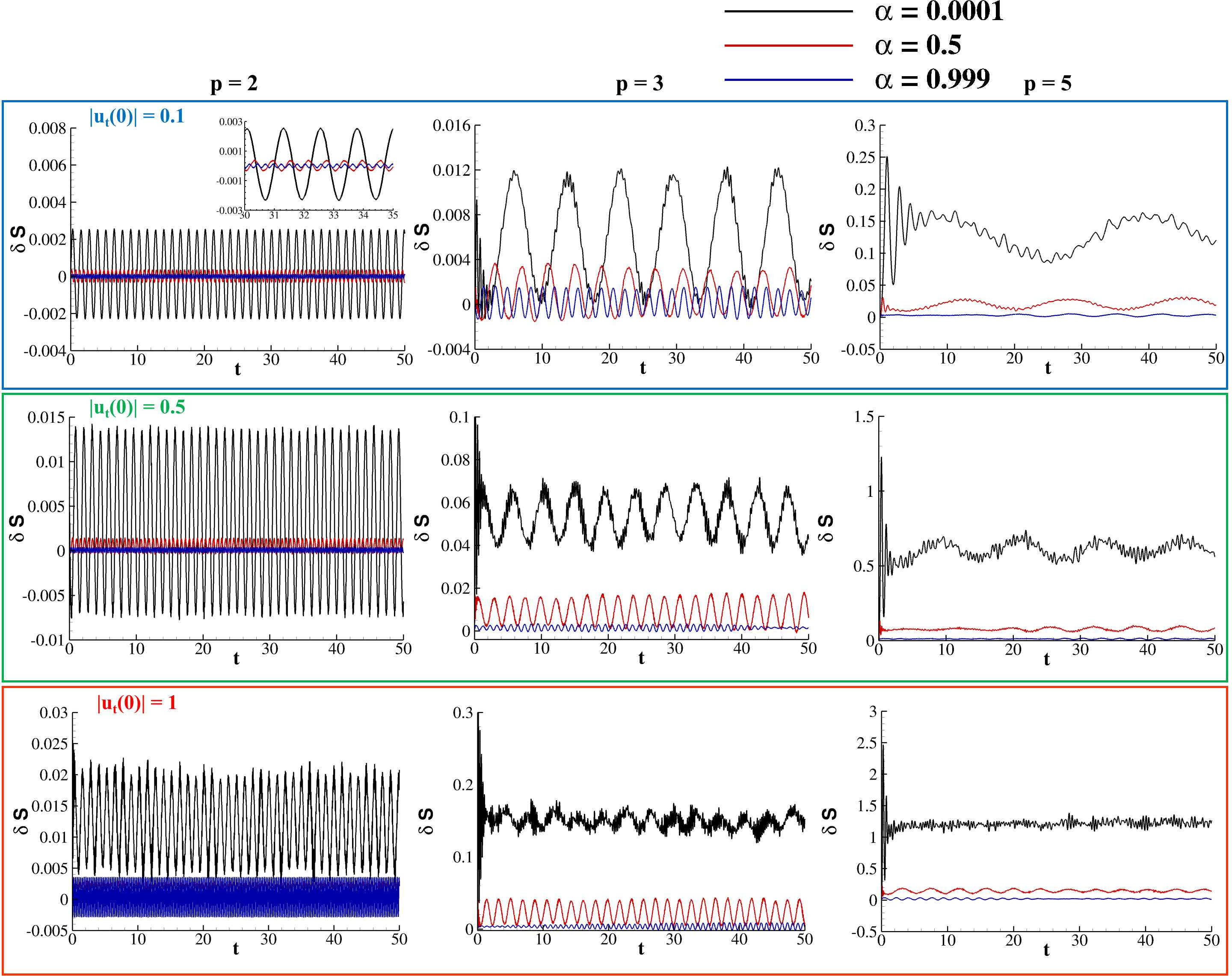}
\caption{{\bf Distribution of $\delta S(t)$ as a function of $p$, $\alpha$ and $|\pt\u(0)|$.}}
\label{allP_random}
\end{figure}
The role of $\alpha$ and $p$ is further deciphered by analyzing the distribution of $\delta S$ over time when systematically varying $p$, $\alpha$, and $|\pt\u(0)|$ (see {\bf Figure.\ref{allP_random}}). {Three scenarios appear: 
\begin{itemize}
\item for $p=2$ the material behave as a linear elastic solid and the distribution of $\delta S$ correspond to a periodic oscillation around a certain offset that depends on the effective mechanical stiffness of the material with respect to $|\pt \u(0)|$;
\item dispersive effects arises and the distribution of $\delta S$ appear to be a superposition of different signals (see plots corresponding to (p=3, $\alpha>0$) for $|\pt \u(0)|$=0.1 and 0.5 and plots corresponding to (p=3, $\alpha>0.5$) for $|\pt \u(0)|$=1.0);
\item the system is immediately dramatically damaged and the distribution of $\delta S$ appears as a dense band, a quasi-constant function with some superposed perturbations (see plots corresponding to (p=3, $\alpha=0.0001$) and $|\pt \u(0)|$=1.0, (p=5)).
\end{itemize}}
Those three scenarios, indeed, depend on both, the module of the imposed initial velocity, $|\pt\u(0)|$ and the characteristics of the material itself, $(p,\alpha)$. Specifically, by increasing $p$ the exponent of the numerator in the integral kernel  $({\bf K}\,\u)$ increases. This gives that for small relative displacements - $|\u_i-\u_j|$ - the response of the material is almost negligible. While, for large relative displacements the response of the material becomes intense. For fixed $p$, the power of $d_{\M}({\bf x'},{\bf x})$ in \eqref{eq:operator} is determined by $\alpha$ and, in turn, it gives the weight of nonlocal interactions. As a result of this complex picture, the enunciated three scenarios are produced.
\begin{figure}
\centering
\includegraphics[scale=0.21]{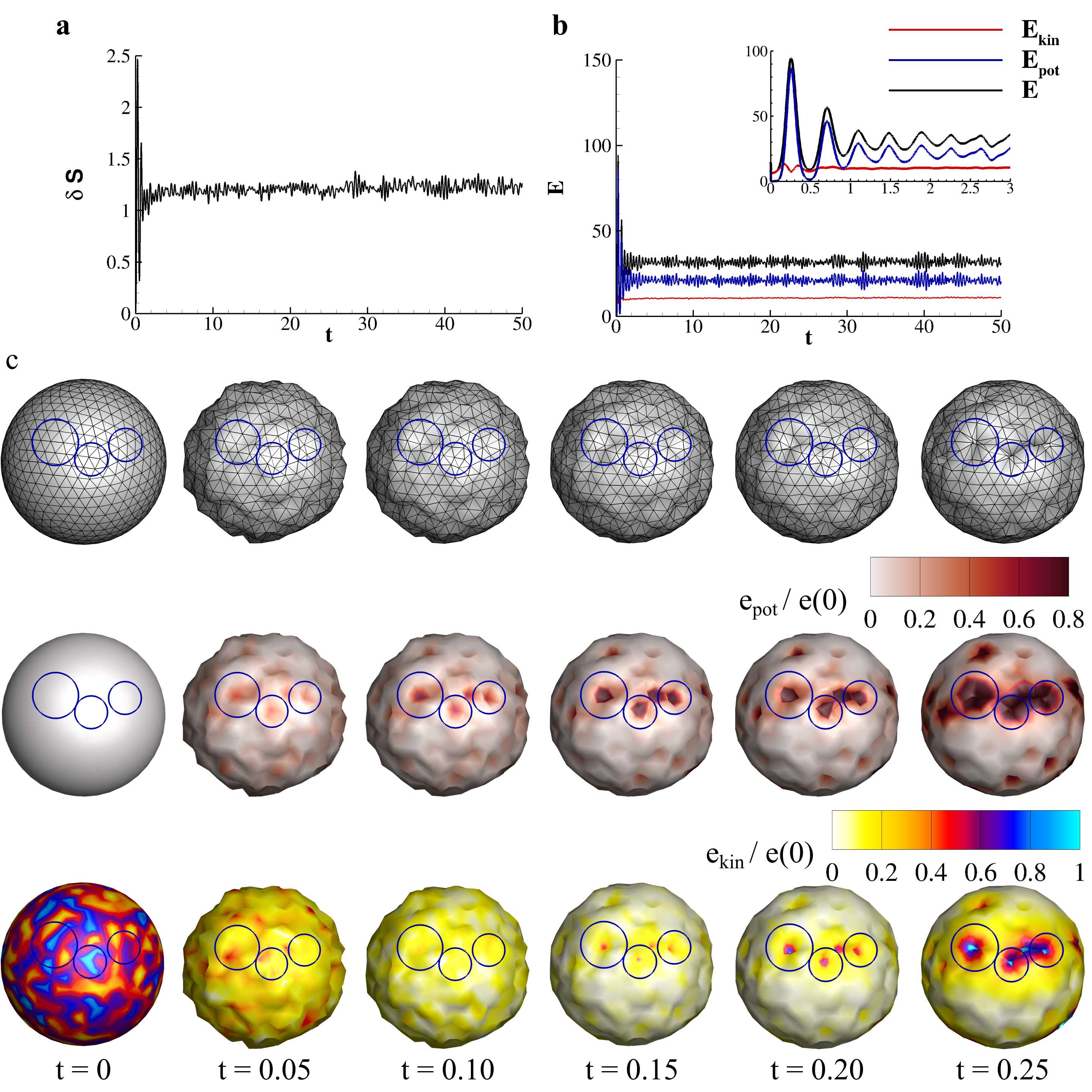}
\caption{{\bf Fractures generation in the spherical surface for $p=5$, $\alpha=0.0001$, and $|\pt\u(0)|=1$.} Distribution in time of the surface strain ({\bf a}) and energy ({\bf b}). ({\bf c}) System configuration taken at different time points providing the computational mesh deformation and contour plots of both kinetic and potential internal energy densities.}
\label{df1_random}
\end{figure}
The generation of {\it fractures} localized on the spherical surface corresponds to the formation of singularities in $\u$ that may be observed strictly for $p>2$ when challenging the system with a higher module of $|\pt \u(0)|$. As the matter of fact, for $p=2$ a singularity in $\u$ cannot be generated starting from a non-singular initial condition. Specifically, for $p=5$ and $|\pt \u(0)|=1$ - worst case scenario within the parameter space chosen - an harsh response of the system is triggered and a strong increase in $\delta S$ is observed for $t<0.3$ ({\bf Figure.\ref{df1_random}a}) as a consequence of the potential energy violent growth ({\bf Figure.\ref{df1_random}b}). Surface fractures appear in the regions in which the density of the internal potential energy $e_{pot}$ localizes - as shown in {\bf Figure.\ref{df1_random}c}. Specifically, for $t<0.15$  both energy densities results quite uniformly distributed on the manifold. On the contrary, for $t \geq 0.15$ the contour plots in {\bf Figure.\ref{df1_random}c} clearly show a net concentration of $e_{pot}$ in the enlighten blue circles delimiting three fracture zones. After this initial rush, the damaged system continues its evolution randomly corrugating the non-fractured surface, and the distribution of $\delta S$ for $t>1$ flattens. Interestingly, this harsh response documented for $E_{pot}$ and $\delta S$ is not found in $E_{kin}$. The kinetic energy density localizes in the fractured regions as well its potential counterpart, however, its integral remains controlled by the initial conditions (see {\bf Figure.\ref{df1_random}b}). Lastly, the authors verified that the positions in which such localizations appear due to the randomly chosen initial conditions do not follow any specificity. 

\subsection{Spherical capsule deforming under uniaxial load}
\begin{figure}
\centering
\includegraphics[scale=0.21]{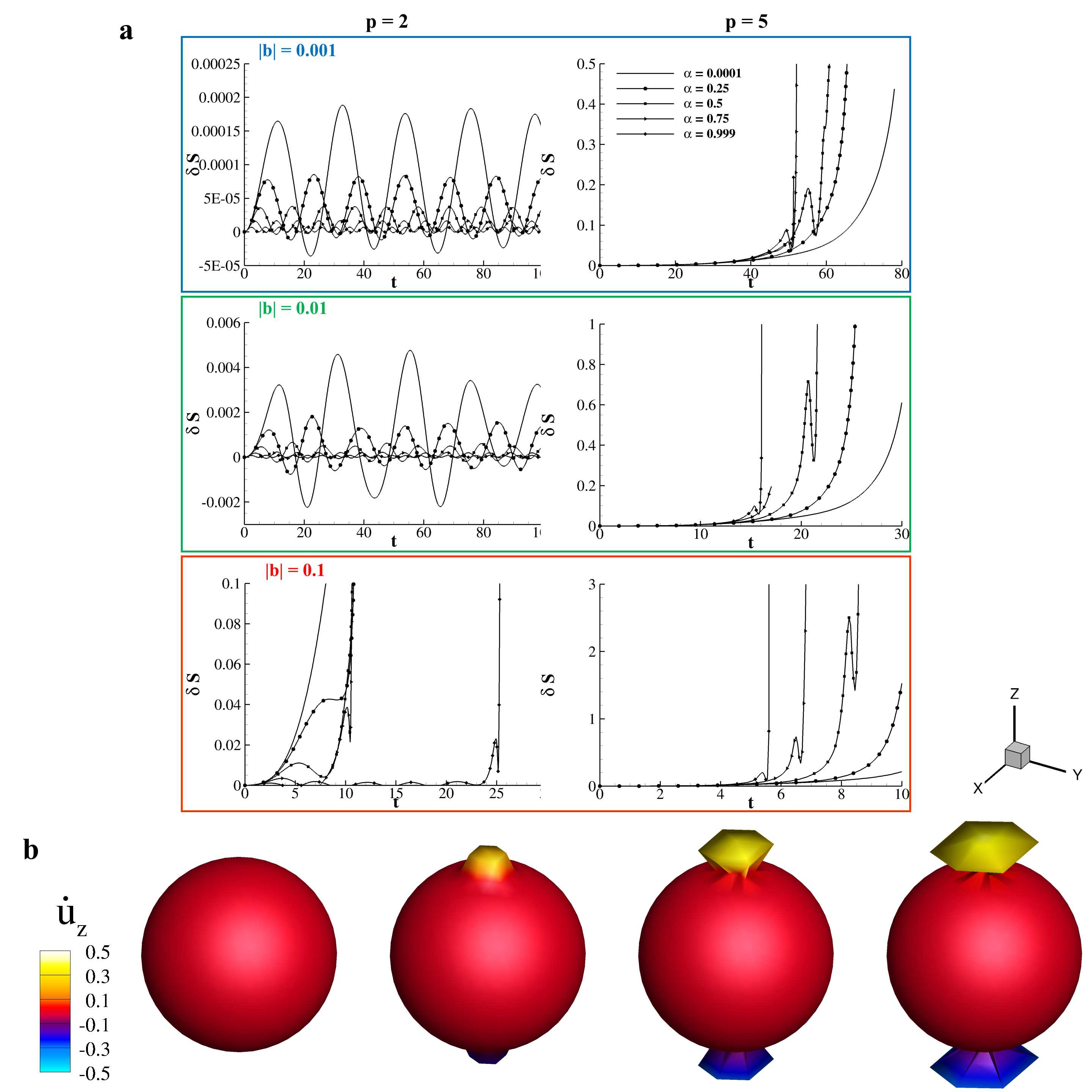}
\caption{{\bf Spherical capsule deforming under uniaxial load.} {\bf a.} Distribution of $\delta S$ as a function of $\alpha$ and $|{\bf b}|$ for $p=$ 2 and 5. {\bf b.} Configurations obtained for $\alpha = 0.5$ and $|{\bf b}|=0.1$ taken at $t$ = 0, 5, 8.75, and 9 with superposed contour levels of $\dot{\u}_z$.}
\label{uniload}
\end{figure}
The system is analyzed also in presence of external forcing. Specifically, the evolution of the spherical manifold deforming due to a constant load parallel to z, ${\bf b}$, is computed. The load is applied to all vertices having coordinates (0,0,1) and (0,0,-1), with a tolerance of 0.05, being 1 the spherical capsule radius. Firstly, the behavior of the system for the linear case, $p=2$, is assessed as a function of $\alpha$ and $|{\bf b}|$, as documented in {\bf Figure.\ref{uniload}a}. For $|{\bf b}| \leq 0.01$ the material response  and the functional form of $\delta S$ correspond to an oscillation whose characteristics depend on $\alpha$. However, in this specific case some modulations in such oscillations are observed. Moreover, the role of $\alpha$ as a parameter for tuning the material stiffness clearly emerges. In fact, as $\alpha$ approaches to 1 a strong regularization of $\delta S$ oscillations is observed and dispersive effects seems to become negligible. All peaks in $\delta S$ distributions obtained for $|{\bf b}|=0.001$ assess to 3.75$\times 10^{-5}$, 1.65$\times 10^{-5}$ and 0.8$\times 10^{-5}$, for $\alpha = 0.5$, 0.75 and 0.999 respectively. On the contrary, a peak value of 8.27$\times10^{-5}$ with a modulation of $\pm 7.5\times10^{-5}$ is obtained for $\alpha=0.25$ and $1.8\times10^{-4}\pm 0.8\times10^{-4}$ for $\alpha = 0.0001$. Analogous trends are observed for $|{\bf b}|=0.01$ while for $|{\bf b}|=0.1$ the system damages for all $\alpha$ as demonstrated in {\bf Figure.\ref{uniload}a}. As discussed for the previous numerical experiment, by increasing $p=5$ the material   response to stimuli becomes stiffer and fractures are generated easily regardless of $\alpha$. Specifically, the exponent of the numerator in the kernel of ${\bf K}(\u)$ increases linearly with $p$. So that, for small displacements the response of the material is negligible while for large displacements the response becomes harsh. With the same picture, here for $p=5$,  the system exhibits fracture regardless from $\alpha$ and $|{\bf b}|$ as demonstrated in {\bf Figure.\ref{uniload}b}. For completeness the configurations  acquired during the evolution are reported for $|{\bf b}|=0.1$ and $\alpha=0.5$ in {\bf Figure.\ref{uniload}c}. 
           
\section*{\bf Conclusions}

The analysis of the Cauchy problem for the peridynamics evolution of a two-dimensional closed smooth manifold has been deepened in this work through a suitable discretization leading to a detailed numerical analysis. The consistency of such discretization has been rigorously proved and various numerical experiments have contributed to clarifying some questions characterizing the creation of singularities. In particular, the role played by the non-local character of the internal interaction has been highlighted by showing that strong non-locality (quantified by small values of $\alpha$) leads inexorably to the appearance of severe growth and blow-up, regardless of the strength of the non-linearity (quantified by high values of $p$) of mechanical relevant quantities (surface strain and energy). On the other hand, strong nonlinearities induce high energy concentration whose space location, at this stage, seems to be unpredictable and deserves further studies. 
 
\section*{\bf Acknowledgments}

AC and TP are members of Gruppo Nazionale per il Calcolo Scientifico (GNCS), GMC and FM of Gruppo Nazionale per l'Analisi Matematica, la Probabilit\`{a} e le loro Applicazioni (GNAMPA) of the Istituto Nazionale di Alta Matematica (INdAM). GMC expresses its gratitude to HIAS - Hamburg Institute for Advanced Study for their warm hospitality.

This work was partially supported by:
\begin{itemize}
\item the project ``Research for Innovation'' (REFIN) - POR Puglia FESR FSE 2014-2020 - Asse X - Azione 10.4 (Grant No. CUP - D94120001410008);
\item INdAM – GNCS ``Project Metodi numerici per modelli descritti mediante operatori differenziali e integrali non locali'' Grant No. CUP E53C22001930001;
\item Italian Ministry of Education, University and Research under the Programme Department of Excellence Legge 232/2016 (Grant No. CUP - D94I18000260001).
\end{itemize}    

%\bibliographystyle{abbrv}
%\bibliography{CCMP-ref}

\begin{thebibliography}{10}

\bibitem{MR3620141}
G.~Acosta and J.~P. Borthagaray.
\newblock A fractional {L}aplace equation: regularity of solutions and finite
  element approximations.
\newblock {\em SIAM J. Numer. Anal.}, 55(2):472--495, 2017.

\bibitem{bauer2010}
R.~Bauer, D.~Delling, P.~Sanders, D.~Schieferdecker, D.~Schultes, and
  D.~Wagner.
\newblock Combining hierarchical and goal-directed speed-up techniques for
  dijkstra's algorithm.
\newblock {\em Journal of Experimental Algorithmics (JEA)}, 15:2--1, 2010.

\bibitem{behera2022imposition}
D.~Behera, P.~Roy, S.~V.~K. Anicode, E.~Madenci, and B.~Spencer.
\newblock Imposition of local boundary conditions in peridynamics without a
  fictitious layer and unphysical stress concentrations.
\newblock {\em Computer Methods in Applied Mechanics and Engineering},
  393:114734, 2022.

\bibitem{bessa2014meshfree}
M.~Bessa, J.~Foster, T.~Belytschko, and W.~K. Liu.
\newblock A meshfree unification: reproducing kernel peridynamics.
\newblock {\em Computational Mechanics}, 53(6):1251--1264, 2014.

\bibitem{chen2021peridynamics}
J.~Chen, Y.~Jiao, W.~Jiang, and Y.~Zhang.
\newblock Peridynamics boundary condition treatments via the pseudo-layer
  enrichment method and variable horizon approach.
\newblock {\em Mathematics and Mechanics of Solids}, 26(5):631--666, 2021.

\bibitem{CDFMRV}
G.~M. Coclite, S.~Dipierro, G.~Fanizza, F.~Maddalena, M.~Romano, and
  E.~Valdinoci.
\newblock Qualitative aspects in nonlocal dynamics.
\newblock {\em Journal of Peridynamics and Nonlocal Modeling}, Oct. 2021.

\bibitem{CDFMV}
G.~M. Coclite, S.~Dipierro, G.~Fanizza, F.~Maddalena, and E.~Valdinoci.
\newblock Dispersive effects in a scalar nonlocal wave equation inspired by
  peridynamics.
\newblock {\em Nonlinearity}, 35(11):5664, oct 2022.

\bibitem{Coclite_2018}
G.~M. Coclite, S.~Dipierro, F.~Maddalena, and E.~Valdinoci.
\newblock Wellposedness of a nonlinear peridynamic model.
\newblock {\em Nonlinearity}, 32(1):1--21, Nov 2018.

\bibitem{CDMV}
G.~M. Coclite, S.~Dipierro, F.~Maddalena, and E.~Valdinoci.
\newblock Singularity formation in fractional {B}urgers' equations.
\newblock {\em J. Nonlinear Sci.}, 30(4):1285--1305, 2020.

\bibitem{CFLMP}
G.~M. Coclite, A.~Fanizzi, L.~Lopez, F.~Maddalena, and S.~F. Pellegrino.
\newblock Numerical methods for the nonlocal wave equation of the peridynamics.
\newblock {\em Appl. Numer. Math.}, 155:119--139, 2020.

\bibitem{DNPV}
E.~Di~Nezza, G.~Palatucci, and E.~Valdinoci.
\newblock Hitchhiker's guide to the fractional {S}obolev spaces.
\newblock {\em Bull. Sci. Math.}, 136(5):521--573, 2012.

\bibitem{dijkstra1959}
E.~W. Dijkstra et~al.
\newblock A note on two problems in connexion with graphs.
\newblock {\em Numerische mathematik}, 1(1):269--271, 1959.

\bibitem{dimola2022}
N.~Dimola, A.~Coclite, G.~Fanizza, and T.~Politi.
\newblock Bond-based peridynamics, a survey prospecting nonlocal theories of
  fluid-dynamics.
\newblock {\em Advances in Continuous and Discrete Models}, 2022(1):60, Oct
  2022.

\bibitem{zbMATH07527316}
Q.~Du, T.~Mengesha, and X.~Tian.
\newblock Fractional {Hardy}-type and trace theorems for nonlocal function
  spaces with heterogeneous localization.
\newblock {\em Anal. Appl., Singap.}, 20(3):579--614, 2022.

\bibitem{MR3831320}
Q.~Du, Y.~Tao, and X.~Tian.
\newblock A peridynamic model of fracture mechanics with bond-breaking.
\newblock {\em J. Elasticity}, 132(2):197--218, 2018.

\bibitem{zbMATH07505259}
Q.~Du, X.~Tian, C.~Wright, and Y.~Yu.
\newblock Nonlocal trace spaces and extension results for nonlocal calculus.
\newblock {\em J. Funct. Anal.}, 282(12):63, 2022.
\newblock Id/No 109453.

\bibitem{MR2804637}
Q.~Du and K.~Zhou.
\newblock Mathematical analysis for the peridynamic nonlocal continuum theory.
\newblock {\em ESAIM Math. Model. Numer. Anal.}, 45(2):217--234, 2011.

\bibitem{MR3288460}
E.~Emmrich, R.~B. Lehoucq, and D.~Puhst.
\newblock Peridynamics: a nonlocal continuum theory.
\newblock In {\em Meshfree methods for partial differential equations {VI}},
  volume~89 of {\em Lect. Notes Comput. Sci. Eng.}, pages 45--65. Springer,
  Heidelberg, 2013.

\bibitem{MR3061151}
E.~Emmrich and D.~Puhst.
\newblock Well-posedness of the peridynamic model with {L}ipschitz continuous
  pairwise force function.
\newblock {\em Commun. Math. Sci.}, 11(4):1039--1049, 2013.

\bibitem{MR3297136}
E.~Emmrich and D.~Puhst.
\newblock Measure-valued and weak solutions to the nonlinear peridynamic model
  in nonlocal elastodynamics.
\newblock {\em Nonlinearity}, 28(1):285--307, 2015.

\bibitem{EP}
E.~Emmrich and D.~Puhst.
\newblock Survey of existence results in nonlinear peridynamics in comparison
  with local elastodynamics.
\newblock {\em Comput. Methods Appl. Math.}, 15(4):483--496, 2015.

\bibitem{MR2375050}
E.~Emmrich and O.~Weckner.
\newblock On the well-posedness of the linear peridynamic model and its
  convergence towards the {N}avier equation of linear elasticity.
\newblock {\em Commun. Math. Sci.}, 5(4):851--864, 2007.

\bibitem{E}
A.~C. Eringen.
\newblock {\em Nonlocal continuum field theories}.
\newblock Springer-Verlag, New York, 2002.

\bibitem{MR0307573}
A.~C. Eringen and D.~G.~B. Edelen.
\newblock On nonlocal elasticity.
\newblock {\em Internat. J. Engrg. Sci.}, 10:233--248, 1972.

\bibitem{gu2017voronoi}
X.~Gu, Q.~Zhang, and X.~Xia.
\newblock Voronoi-based peridynamics and cracking analysis with adaptive
  refinement.
\newblock {\em International Journal for Numerical Methods in Engineering},
  112(13):2087--2109, 2017.

\bibitem{jafarzadeh2020efficient}
S.~Jafarzadeh, A.~Larios, and F.~Bobaru.
\newblock Efficient solutions for nonlocal diffusion problems via
  boundary-adapted spectral methods.
\newblock {\em Journal of Peridynamics and Nonlocal Modeling}, 2(1):85--110,
  2020.

\bibitem{javili2019peridynamics}
A.~Javili, R.~Morasata, E.~Oterkus, and S.~Oterkus.
\newblock Peridynamics review.
\newblock {\em Mathematics and Mechanics of Solids}, 24(11):3714--3739, 2019.

\bibitem{jha21}
P.~Jha and R.~Lipton.
\newblock Finite element approximation of nonlocal dynamic fracture models.
\newblock {\em Discrete \& Continuous Dynamical Systems-B}, 26(3):1675, 2021.

\bibitem{jha18}
P.~K. Jha and R.~Lipton.
\newblock Numerical analysis of nonlocal fracture models in hölder space.
\newblock {\em SIAM Journal on Numerical Analysis}, 56(2):906--941, 2018.

\bibitem{jha19}
P.~K. Jha and R.~Lipton.
\newblock Numerical convergence of finite difference approximations for state
  based peridynamic fracture models.
\newblock {\em Computer Methods in Applied Mechanics and Engineering},
  351:184--225, 2019.

\bibitem{KRONER1967731}
E.~Kröner.
\newblock Elasticity theory of materials with long range cohesive forces.
\newblock {\em International Journal of Solids and Structures}, 3(5):731--742,
  1967.

\bibitem{liang2021boundary}
X.~Liang, L.~Wang, J.~Xu, and J.~Wang.
\newblock The boundary element method of peridynamics.
\newblock {\em International Journal for Numerical Methods in Engineering},
  122(20):5558--5593, 2021.

\bibitem{lipton14}
R.~Lipton.
\newblock Dynamic brittle fracture as a small horizon limit of peridynamics.
\newblock {\em Journal of Elasticity}, 117(1):21--50, 2014.

\bibitem{lipton16}
R.~Lipton.
\newblock Cohesive dynamics and brittle fracture.
\newblock {\em Journal of Elasticity}, 124(2):143--191, 2016.

\bibitem{lipton21}
R.~P. Lipton and P.~K. Jha.
\newblock Nonlocal elastodynamics and fracture.
\newblock {\em Nonlinear Differential Equations and Applications NoDEA},
  28(3):1--44, 2021.

\bibitem{LP}
L.~Lopez and S.~F. Pellegrino.
\newblock A spectral method with volume penalization for a nonlinear
  peridynamic model.
\newblock {\em Internat. J. Numer. Methods Engrg.}, 122(3):707--725, 2021.

\bibitem{LP1}
L.~Lopez and S.~F. Pellegrino.
\newblock A space-time discretization of a nonlinear peridynamic model on a 2d
  lamina.
\newblock {\em Computers \& Mathematics with Applications}, 116:161--175, 2022.
\newblock New trends in Computational Methods for PDEs.

\bibitem{prudhomme2020}
S.~Prudhomme and P.~Diehl.
\newblock On the treatment of boundary conditions for bond-based peridynamic
  models.
\newblock {\em Computer Methods in Applied Mechanics and Engineering},
  372:113391, 2020.

\bibitem{russell2002}
S.~{Russell} and P.~{Norvig}.
\newblock {\em {Artificial intelligence. A modern approach}}.
\newblock Englewood Cliffs, NJ: Prentice-Hall International, 1995.

\bibitem{shojaei2022hybrid}
A.~Shojaei, A.~Hermann, C.~J. Cyron, P.~Seleson, and S.~A. Silling.
\newblock A hybrid meshfree discretization to improve the numerical performance
  of peridynamic models.
\newblock {\em Computer Methods in Applied Mechanics and Engineering},
  391:114544, 2022.

\bibitem{SILLING201073}
S.~Silling and R.~Lehoucq.
\newblock Peridynamic theory of solid mechanics.
\newblock In H.~Aref and E.~van~der Giessen, editors, {\em Advances in Applied
  Mechanics}, volume~44 of {\em Advances in Applied Mechanics}, pages 73--168.
  Elsevier, 2010.

\bibitem{Sill}
S.~A. Silling.
\newblock Reformulation of elasticity theory for discontinuities and long-range
  forces.
\newblock {\em J. Mech. Phys. Solids}, 48(1):175--209, 2000.

\bibitem{MR2592410}
S.~A. Silling.
\newblock Linearized theory of peridynamic states.
\newblock {\em J. Elasticity}, 99(1):85--111, 2010.

\bibitem{silling2005meshfree}
S.~A. Silling and E.~Askari.
\newblock A meshfree method based on the peridynamic model of solid mechanics.
\newblock {\em Computers \& structures}, 83(17-18):1526--1535, 2005.

\bibitem{MR2348150}
S.~A. Silling, M.~Epton, O.~Weckner, J.~Xu, and E.~Askari.
\newblock Peridynamic states and constitutive modeling.
\newblock {\em J. Elasticity}, 88(2):151--184, 2007.

\bibitem{MR2430855}
S.~A. Silling and R.~B. Lehoucq.
\newblock Convergence of peridynamics to classical elasticity theory.
\newblock {\em J. Elasticity}, 93(1):13--37, 2008.

\bibitem{tian2014}
X.~Tian and Q.~Du.
\newblock Asymptotically compatible schemes and applications to robust
  discretization of nonlocal models.
\newblock {\em SIAM Journal on Numerical Analysis}, 52(4):1641--1665, 2014.

\bibitem{zhao2020algorithm}
J.~Zhao, S.~Jafarzadeh, Z.~Chen, and F.~Bobaru.
\newblock An algorithm for imposing local boundary conditions in peridynamic
  models on arbitrary domains.
\newblock Oct. 2020.

\end{thebibliography}

\end{document}